\documentclass[12pt]{amsart}
\usepackage{epsfig,amsfonts,amsthm,amssymb,latexsym,amsmath}
\usepackage[latin1]{inputenc}
\usepackage{bm}
\usepackage{cite}

\textwidth 15cm \textheight 22cm \oddsidemargin=0.2 true in
\evensidemargin=0.2 true in

\hyphenation{problem} \hyphenation{existence}
\hyphenation{equivalent}

\newfont{\bms}{msbm10 scaled 1100}
\def\matR{\mbox{\bms R}}
\def\matQ{\mbox{\bms Q}}
\def\matZ{\mbox{\bms Z}}
\def\matK{\mbox{\bms K}}

\def\matN{\mbox{\bms N}}

\newfont{\bmsi}{msbm10 scaled 700}

\def\mattQ{\mbox{\bmsi Q}}

\def\mattK{\mbox{\bmsi K}}
\def\mattL{\mbox{\bmsi L}}

\newfont{\bmsw}{msbm10 scaled 2400}

\newfont{\bmst}{msbm10 scaled 1600}


\theoremstyle{plain}
\newtheorem{theorem}{Theorem}[section]
\newtheorem{corollary}[theorem]{Corollary}
\newtheorem{remark}[theorem]{Remark}
\theoremstyle{definition}
\newtheorem{definition}[theorem]{Definition}
\theoremstyle{example}

\theoremstyle{proposition}
\newtheorem{proposition}[theorem]{Proposition}
\theoremstyle{lemma}
\newtheorem{lemma}[theorem]{Lemma}
\vspace{5pt}

\title{On the existence of rotated $D_n$-lattices constructed  via  Galois extensions}
\vspace{5pt}

\thanks{This work was partially
supported by CAPES 2548/2010, CNPq 150802/2012-9, 309561/2009-4 and
FAPESP 2007/56052-8}

\author{\scriptsize Grasiele C. \ Jorge  \ and \
\ Sueli I. R. \ Costa}
\date{}

\begin{document}
\maketitle

\vspace{-20pt}
\begin{center}
{\footnotesize  IMECC-UNICAMP ,
\\ 651, St. Sérgio Buarque de Holanda, 13083-859, Campinas, SP, BRAZIL \\
Emails: [grajorge, sueli] \ @ime.unicamp.br \\
}\end{center}

\hrule

\begin{abstract}
In this paper we construct families of rotated $D_n$-lattices which
may be suitable for signal transmission over both Gaussian and
Rayleigh fading channels via subfields of cyclotomic fields. These
constructions exhibit full diversity and good minimum product
distance which are important parameters related to the signal
transmission error probability. It is also shown that for some
Galois extensions $\matK|\matQ$ it is impossible to construct
rotated $D_n$-lattices via fractional ideals of $\mathcal
O_{\mattK}$.
\end{abstract}

\medskip
\noindent \subjclass{\footnotesize {\bf Mathematical subject
classification:} 11H06 - 11R80 - 97N70}

\medskip
\noindent \keywords{\footnotesize {\bf Keywords:}  $D_n$-lattices,
Galois extensions, Full diversity lattices}
\medskip

\hrule



\section{Introduction}

 A \textit{lattice} $\Lambda$ is a discrete additive subgroup of
$\mathbb{R}^n$. Equivalently, $\Lambda \subseteq \mathbb{R}^n$ is a
lattice iff there are linearly independent vectors ${\bm
v_1},\ldots,{\bm v_m} \in \mathbb{R}^n\mbox{, } m \leq n$, such that
any ${\bm y} \in \Lambda$ can be written as ${\bm y} = \sum_{i=1}^m
\alpha_i {\bm v_i} \mbox{,} \,\, \alpha_i \in \mathbb{Z}$. The set
$\{{\bm v_1},\ldots,{\bm v_m}\}$ is called a \textit{basis} for
$\Lambda$. A matrix $M$ whose rows are these vectors is said to be a
\textit{generator matrix} for $\Lambda$ while the associated
\textit{Gram matrix} is $G = M M^{t}$. The \textit{determinant} of
$\Lambda$ is $\det \Lambda = \det{G}$ and it is an invariant under
basis change \cite{sloane}.

Lattices are used in applications of different areas, particularly
in information theory and more recently in cryptography. In this
paper we attempt to construct lattices with full rank (m=n) which
may be suitable for signal transmission over both Gaussian and
Rayleigh fading channels \cite{boutros}. For this purpose the
lattice parameters that we consider here are packing density,
diversity and minimum product distance \cite{sloane,Oggier}.

The {\it packing density}, $\Delta(\Lambda)$, of a lattice $\Lambda$
is the proportion of the space $\matR^{n}$ covered by the union of
congruent disjoint spheres of maximum radius centered at the points
of $\Lambda$ \cite{sloane}. A lattice $\Lambda$ has {\it diversity}
$k\leq n$ if $k$ is the maximum number such that for all ${\bm
y}=(y_1,\cdots,y_n) \in \Lambda$, ${\bm y} \neq {\bm 0}$, there are
at least $k$ nonzero coordinates. Given a full rank lattice with
full diversity $\Lambda \subseteq \matR^{n}$ $(k=n)$ the {\it
minimum product distance} is defined as $d_{min}(\Lambda) =
\min\left\{\prod_{i=1}^{n}|y_i|,\,\,\, {\bm y}=(y_1,\cdots,y_n) \in
\Lambda,{\bm  y} \neq {\bm 0} \right\}$ \cite{Oggier}.

Special algebraic lattice constructions allows to derive certain
lattice parameters, which are usually difficult to be calculated for
general lattices in $\matR^{n}$, such as the packing density and the
minimum product distance. Some of these constructions  can be found
in \cite{boutros,andre,Oggier,gabriele,evaivan}.

In \cite{grasiele}  we have constructed families of full diversity
rotated $D_n$-lattices through of a twisted homomorphism and a free
$\matZ$-module  of rank $n$ contained in the totally real subfield
$\matQ(\zeta_m+\zeta_m^{-1})$ of the cyclotomic field
$\matQ(\zeta_m)$ for $n=\phi(m)/2$, where $\phi$ is the Euler
function. For $\matK_1=\matQ(\zeta_{2^r}+\zeta_{2^r}^{-1})$, $r \geq
5$, rotated $D_{2^{r-2}}$-lattices were obtained via free
$\matZ$-modules which are ideals of $\mathcal O_{\mattK_1}$ whereas
for $\matK_2=\matQ(\zeta_p+\zeta_p^{-1})$, $p \geq 7$ prime, rotated
$D_{(p-1)/2}$-lattices were constructed via free $\matZ$-modules $M
\subseteq \mathcal O_{\mattK_2}$ of rank $n = [\matK_2:\matQ]$ that
are not ideals of $\mathcal O_{\mattK_2}$.

In this paper, considering the compositum $\matK_3$ of
$\matQ(\zeta_{2^r}+\zeta_{2^r}^{-1})$ and
$\matQ(\zeta_p+\zeta_p^{-1})$ and the compositum $\matK_4$ of
$\matQ(\zeta_{p_1}+\zeta_{p_1}^{-1})$ and
$\matQ(\zeta_{p_2}+\zeta_{p_2}^{-1})$, where $p,p_1,p_2$ are prime
numbers, $p_1 \neq p_2$, we construct families of rotated
$D_n$-lattices via free $\matZ$-modules of rank $n$ that are not
ideals. In both cases we get a closed-form for their minimum product
distances. If it was possible to construct such rotated
$D_n$-lattices via principal ideals of $\mathcal O_{\mattK_i}$,
$i=2,3,4$, their minimum product distances should be twice the ones
obtained in our construction and therefore more appropriate to our
purpose. However, we show here that in the cases we have considered
it is impossible to construct rotated $D_n$-lattices via fractional
ideals of $\mathcal O_{\mattK_i}$, $i=2,3,4$. Moreover, we present a
necessary condition to construct these lattices via a Galois
extension $\matK|\matQ$. These results are related to an open
problem stated in \cite{eva2}: Given a lattice $\Lambda$, is there a
number field $\matK$ such that it is possible to construct $\Lambda$
via $\mathcal O_{\mattK}$?

This paper is organized as follows. In Section 2 we collect some
definitions and results on ideal lattices. In Section 3 new
constructions of rotated $D_n$-lattices obtained via the compositum
of some subfields of cyclotomic fields are derived. Section 4 is
devoted to a discussion on the existence of rotated $D_n$-lattices
constructed via fractional ideals of a Galois extension
$\matK|\matQ$.

\section{Ideal lattices}

The construction of ideal lattices presented here was introduced in
\cite{eva2}.

In what follows we use some classical notation for number fields
\cite{pierre,was,marcus}. Let $\matK$ be a Galois extension of
degree $n = [\matK:\matQ]$,
$Gal(\matK|\matQ)=\{\sigma_i\}_{i=1}^{n}$ the Galois group,
$d_{\mattK}$ the discriminant of $\matK|\matQ$, $\mathcal
O_{\mattK}$ its ring of integers, $N_{\mattK|\mattQ}(x)$ and
$Tr_{\mattK|\mattQ}(x)$ the norm and the trace of an element $x \in
\matK$, respectively.


\begin{definition} Let $\matK$ be a totally real number field and $\alpha \in \matK$ such that $\alpha_i=\sigma_i(\alpha)
> 0$ for all $i=1,\cdots,n.$ The homomorphism

$$\left.\begin{array}{l} \sigma_{\alpha}:\matK \longrightarrow
\matR^{n} \\
\hspace{0.8cm} x\longmapsto
\left(\sqrt{\alpha_{1}}\sigma_1(x),\ldots,\sqrt{\alpha_{n}}\sigma_{n}(x)\right)\end{array}
\right.$$ is called {\it twisted homomorphism}. When $\alpha=1$ the
twisted homomorphism is the so called {\it Minkowski homomorphism}.
\end{definition}

\begin{proposition}\cite{Oggier} If $\mathcal I\subseteq {\matK}$ is a free $\matZ$-module of rank
$n$ with $\matZ$-basis $\{w_1,\ldots,w_n\}$, then the image $\Lambda
= \sigma_{\alpha}(\mathcal I)$  is a lattice in $\matR^n$ with basis
\linebreak $\{{ \sigma_{\alpha}(w_1)},\ldots,{
\sigma_{\alpha}(w_n)}\}.$ Moreover, if $\matK$ is totally real,
$\Lambda$ is a full diversity lattice which has  $ G= M.M^{t} =
\left( Tr_{\mattK|\mattQ}(\alpha w_{i}{w_{j}}) \right)_{i,j=1}^{n}$
as a Gram matrix.
\end{proposition}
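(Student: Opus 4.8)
The plan is to verify the three assertions in turn: that $\Lambda$ is a lattice, that the image of the $\matZ$-basis of $\mathcal I$ is a basis for it, and that the claimed trace form is its Gram matrix; full diversity will then follow almost immediately from the injectivity of the embeddings.

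First I would show that the vectors $\sigma_{\alpha}(w_1),\ldots,\sigma_{\alpha}(w_n)$ are linearly independent over $\matR$, which simultaneously gives that $\Lambda$ is a full-rank lattice and that these vectors form a basis. Writing $M$ for the matrix whose $i$-th row is $\sigma_{\alpha}(w_i)$, one has the factorization $M = \Sigma\, D$, where $\Sigma=(\sigma_j(w_i))_{i,j}$ and $D=\mathrm{diag}(\sqrt{\alpha_1},\ldots,\sqrt{\alpha_n})$. Since $\alpha_j=\sigma_j(\alpha)>0$ for every $j$, we have $\det D=\sqrt{\prod_j\alpha_j}=\sqrt{N_{\mattK|\mattQ}(\alpha)}\neq 0$. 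Because $\{w_1,\ldots,w_n\}$ is a $\matZ$-basis of a free module of rank $n=[\matK:\matQ]$, it is also a $\matQ$-basis of $\matK$, so its embedding matrix $\Sigma$ is nonsingular; this is the nonvanishing of the discriminant of a $\matQ$-basis of $\matK$, a consequence of the separability of $\matK|\matQ$ (equivalently, of the linear independence of the embeddings $\sigma_1,\ldots,\sigma_n$ by Dedekind's theorem). Hence $\det M=\det\Sigma\cdot\det D\neq 0$, the rows of $M$ are $\matR$-linearly independent, and $\Lambda=\sigma_{\alpha}(\mathcal I)$ is a lattice with the stated basis.

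Next I would compute the Gram matrix directly. Since $G=MM^{t}$, its $(i,k)$ entry is
\[
\sum_{j=1}^{n}\bigl(\sqrt{\alpha_j}\,\sigma_j(w_i)\bigr)\bigl(\sqrt{\alpha_j}\,\sigma_j(w_k)\bigr)
=\sum_{j=1}^{n}\sigma_j(\alpha)\,\sigma_j(w_iw_k)
=\sum_{j=1}^{n}\sigma_j(\alpha w_iw_k),
\]
where I have used $\alpha_j=\sigma_j(\alpha)$ and that each $\sigma_j$ is a ring homomorphism. Because $\matK|\matQ$ is Galois of degree $n$, the $\sigma_j$ are exactly the $n$ distinct embeddings of $\matK$ into $\matR$, so this last sum equals $Tr_{\mattK|\mattQ}(\alpha w_iw_k)$, giving $G=\bigl(Tr_{\mattK|\mattQ}(\alpha w_iw_k)\bigr)_{i,k=1}^{n}$.

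Finally, for full diversity, I would take any nonzero $x=\sum_i a_iw_i\in\mathcal I$ and note that its image has $j$-th coordinate $\sqrt{\alpha_j}\,\sigma_j(x)$; since $\matK$ is totally real each $\sigma_j$ is a real embedding, and being a field homomorphism it is injective, so $\sigma_j(x)\neq 0$ for every $j$ whenever $x\neq 0$. As $\sqrt{\alpha_j}>0$, every coordinate of $\sigma_{\alpha}(x)$ is nonzero, so $\Lambda$ has diversity $n$. The only step requiring a genuine input from number theory is the nonsingularity of $\Sigma$; everything else is a direct computation, so I expect that to be the sole point deserving care.
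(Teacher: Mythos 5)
Your proposal is correct and takes the standard route: the paper itself states this proposition without proof (it is quoted from \cite{Oggier}), and your argument --- the factorization $M=\Sigma D$ with $\det\Sigma\neq 0$ via the nonvanishing discriminant of a $\matQ$-basis, the entrywise computation $(MM^{t})_{ik}=\sum_{j}\sigma_j(\alpha w_iw_k)=Tr_{\mattK|\mattQ}(\alpha w_iw_k)$ using that the Galois group exhausts the embeddings, and injectivity of each real embedding for full diversity --- is precisely the proof given in that reference. You also correctly handle the one point needing care, namely that a $\matZ$-basis of a rank-$n$ free module in $\matK$ is a $\matQ$-basis (clear denominators), which justifies $\det\Sigma\neq 0$ and hence full rank and discreteness under the paper's definition of a lattice.
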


\begin{proposition}\cite{grasiele}\label{distanciaminima} Let $\matK$ be a totally real field number with  $[\matK:\matQ]=n$ and $\mathcal I \subseteq
{\matK}$  a free $\matZ$-module of rank $n$. The {\it minimum
product distance} of $\Lambda =\sigma_{\alpha}(\mathcal I)$ is $
d_{p,min}(\Lambda)=\sqrt{N_{\mattK|\mattQ}(\alpha)}min_{0\neq y\in
\mathcal I}|N_{\mattK|\mattQ}(y)|.$
\end{proposition}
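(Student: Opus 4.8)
The plan is to compute the minimum product distance directly from its definition, using the twisted homomorphism and the multiplicative property of the field norm.

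First I would recall from Proposition 2.2 that $\Lambda = \sigma_\alpha(\mathcal{I})$ is a full diversity lattice, so every nonzero $\bm{y} \in \Lambda$ has all coordinates nonzero and $d_{p,min}(\Lambda)$ is well defined as $\min_{\bm{0} \neq \bm{y} \in \Lambda} \prod_{i=1}^n |y_i|$. Since $\sigma_\alpha$ is a bijection from $\mathcal{I}$ onto $\Lambda$, I would parametrize each nonzero $\bm{y} \in \Lambda$ as $\bm{y} = \sigma_\alpha(y)$ for a unique nonzero $y \in \mathcal{I}$, whose coordinates are $y_i = \sqrt{\alpha_i}\,\sigma_i(y)$ by the definition of the twisted homomorphism.

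The key computation is the product of the absolute values of the coordinates. I would write
$$\prod_{i=1}^n |y_i| = \prod_{i=1}^n \sqrt{\alpha_i}\,|\sigma_i(y)| = \left(\prod_{i=1}^n \sqrt{\alpha_i}\right)\left(\prod_{i=1}^n |\sigma_i(y)|\right).$$
Here the factor $\prod_{i=1}^n \sqrt{\alpha_i} = \sqrt{\prod_{i=1}^n \alpha_i} = \sqrt{\prod_{i=1}^n \sigma_i(\alpha)} = \sqrt{N_{\mattK|\mattQ}(\alpha)}$, using that the $\alpha_i = \sigma_i(\alpha)$ are exactly the conjugates of $\alpha$ and that $N_{\mattK|\mattQ}$ is the product of the conjugates; positivity of each $\alpha_i$ (a hypothesis on $\alpha$) guarantees the square roots behave multiplicatively without sign issues. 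Similarly $\prod_{i=1}^n |\sigma_i(y)| = \left|\prod_{i=1}^n \sigma_i(y)\right| = |N_{\mattK|\mattQ}(y)|$. Substituting gives $\prod_{i=1}^n |y_i| = \sqrt{N_{\mattK|\mattQ}(\alpha)}\,|N_{\mattK|\mattQ}(y)|$.

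Finally, since the factor $\sqrt{N_{\mattK|\mattQ}(\alpha)}$ does not depend on $y$, I would pull it out of the minimization to obtain
$$d_{p,min}(\Lambda) = \sqrt{N_{\mattK|\mattQ}(\alpha)}\,\min_{0 \neq y \in \mathcal{I}} |N_{\mattK|\mattQ}(y)|,$$
which is the claimed formula. There is no serious obstacle here; the only point requiring a little care is justifying that the conjugates $\sigma_i(\alpha)$ range over all of $\{\alpha_1,\dots,\alpha_n\}$ so that their product is genuinely the field norm, and likewise that $N_{\mattK|\mattQ}(y) = \prod_{i=1}^n \sigma_i(y)$ since $\matK|\matQ$ is Galois of degree $n$ with $Gal(\matK|\matQ) = \{\sigma_i\}_{i=1}^n$. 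Once those identifications are in place the result follows immediately.
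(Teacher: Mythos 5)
Your proof is correct: the direct coordinate computation $\prod_{i=1}^{n}|y_i| = \sqrt{N_{\mattK|\mattQ}(\alpha)}\,|N_{\mattK|\mattQ}(y)|$ followed by pulling the constant factor out of the minimization is exactly the standard argument, and the paper itself states this proposition without proof (it is imported from the reference \cite{grasiele}, where the same computation is used). The only point worth noting is that attainment of the minimum is guaranteed because, by Lemma \ref{grasi}, $d\mathcal I \subseteq \mathcal O_{\mattK}$ for some nonzero $d \in \matZ$, so $|N_{\mattK|\mattQ}(y)|$ ranges over the discrete set $\frac{1}{|d|^{n}}\matZ$ and is bounded below by $\frac{1}{|d|^{n}}$ on $\mathcal I \setminus \{0\}$.
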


\begin{proposition} \cite{Oggier} \label{distanciaminima1} If $\matK$ is a totally real field number, $\mathcal I \subseteq \mathcal O_{\mattK}$
is a principal ideal and $\Lambda=\sigma_{\alpha}(\mathcal I)$, then
$ d_{p,min}(\Lambda)= \sqrt{\frac{det(\Lambda)}{|d_{\mattK}|}}.$
\end{proposition}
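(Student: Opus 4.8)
The plan is to combine the norm formula for the minimum product distance from Proposition~\ref{distanciaminima} with an explicit evaluation of $\det\Lambda$, and then to use the hypothesis that $\mathcal I$ is principal to compute the remaining arithmetic minimum. I would first fix a $\matZ$-basis $\{w_1,\ldots,w_n\}$ of $\mathcal I$ and compute $\det\Lambda=\det(\sigma_\alpha(\mathcal I))$. Writing the generator matrix as $M=(\sqrt{\alpha_j}\,\sigma_j(w_i))_{i,j}$, one observes that $M=A\,D$, where $A=(\sigma_j(w_i))_{i,j}$ is the untwisted (Minkowski) matrix and $D=\mathrm{diag}(\sqrt{\alpha_1},\ldots,\sqrt{\alpha_n})$. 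Taking determinants gives $\det M=\sqrt{\prod_j\alpha_j}\,\det A=\sqrt{N_{\mattK|\mattQ}(\alpha)}\,\det A$, since $\prod_j\alpha_j=\prod_j\sigma_j(\alpha)=N_{\mattK|\mattQ}(\alpha)$. Hence $\det\Lambda=(\det M)^2=N_{\mattK|\mattQ}(\alpha)\,(\det A)^2$. The factor $(\det A)^2$ is exactly the discriminant of the basis $\{w_i\}$, and the standard identity for fractional ideals, $\det(\sigma_j(w_i))^2=\det\big(Tr_{\mattK|\mattQ}(w_iw_j)\big)=N(\mathcal I)^2\,d_{\mattK}$ with $N(\mathcal I)=[\mathcal O_{\mattK}:\mathcal I]$, yields $\det\Lambda=N_{\mattK|\mattQ}(\alpha)\,N(\mathcal I)^2\,|d_{\mattK}|$, where I use $d_{\mattK}>0$ and $N_{\mattK|\mattQ}(\alpha)>0$ because $\matK$ is totally real and $\alpha$ is totally positive. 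Consequently $\sqrt{\det\Lambda/|d_{\mattK}|}=N(\mathcal I)\sqrt{N_{\mattK|\mattQ}(\alpha)}$.

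Next I would evaluate the arithmetic minimum appearing in Proposition~\ref{distanciaminima}, namely $\min_{0\neq y\in\mathcal I}|N_{\mattK|\mattQ}(y)|$. For any nonzero $y\in\mathcal I$ one has $(y)\subseteq\mathcal I$, so $\mathcal I\mid(y)$ and therefore $N(\mathcal I)\mid|N_{\mattK|\mattQ}(y)|$; in particular $|N_{\mattK|\mattQ}(y)|\geq N(\mathcal I)$ for every such $y$. Here is where the hypothesis that $\mathcal I$ is principal is essential: writing $\mathcal I=(\beta)$, the element $y=\beta$ satisfies $|N_{\mattK|\mattQ}(\beta)|=N(\mathcal I)$, so the lower bound is attained and $\min_{0\neq y\in\mathcal I}|N_{\mattK|\mattQ}(y)|=N(\mathcal I)$.

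Finally, substituting this minimum into Proposition~\ref{distanciaminima} gives $d_{p,min}(\Lambda)=\sqrt{N_{\mattK|\mattQ}(\alpha)}\,N(\mathcal I)$, which coincides with the expression $\sqrt{\det\Lambda/|d_{\mattK}|}$ obtained above, completing the argument. The only genuinely delicate point is the evaluation of the arithmetic minimum: for a non-principal $\mathcal I$ one would only get the strict inequality $\min_{0\neq y\in\mathcal I}|N_{\mattK|\mattQ}(y)|>N(\mathcal I)$, so the equality -- and hence this clean closed form -- relies precisely on principality. This also clarifies the contrast emphasized in the introduction, where the non-ideal constructions produce a strictly smaller minimum product distance. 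The remaining steps are routine determinant bookkeeping, the only care needed being the positivity of $d_{\mattK}$ and $N_{\mattK|\mattQ}(\alpha)$, both guaranteed by the totally real and totally positive hypotheses.
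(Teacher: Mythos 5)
Your proposal is correct; the paper itself states this proposition without proof (citing \cite{Oggier}), and your argument is exactly the standard derivation that the paper's own machinery sets up: $\det(\Lambda)=N(\mathcal I)^{2}N_{\mattK|\mattQ}(\alpha)\,d_{\mattK}$ as in Proposition~\ref{detb}, together with Proposition~\ref{distanciaminima} and the evaluation $\min_{0\neq y\in\mathcal I}|N_{\mattK|\mattQ}(y)|=N(\mathcal I)$, where principality is used to attain the lower bound coming from $(y)\subseteq\mathcal I$ and multiplicativity of the ideal norm. Your closing remark is also accurate: equality $|N_{\mattK|\mattQ}(y)|=N(\mathcal I)$ forces $(y)=\mathcal I$, so the minimum equals $N(\mathcal I)$ exactly when $\mathcal I$ is principal, which is the arithmetic content behind the factor-of-two loss discussed in the paper's introduction.
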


\begin{definition} \label{relative} The {\it relative minimum product distance} of $\Lambda,$ denoted by ${\bm
d_{p,rel}(\Lambda)}$, is the minimum product distance of a scaled
version of $\Lambda$ with unitary minimum norm vector.
\end{definition}

Proposition \ref{detb} was presented in \cite{eva2} in the case in
which $\mathcal I \subseteq \matK$ is a fractional ideal. We can
extend it to free $\matZ$-modules of rank $n$.  Using the next lemma
the proof of Proposition \ref{detb} is straightforward.

\begin{lemma}\label{grasi}  If $\mathcal I\subseteq \matK$ is a free $\matZ$-module of rank $n$, then there
is  $d\in\matZ-\{0\}$ such that $d \mathcal I\subset \mathcal
O_{\mattK}$.\end{lemma}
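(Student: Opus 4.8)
The plan is to reduce everything to the single-element statement that any $x\in\matK$ admits a nonzero integer $c$ with $cx\in\mathcal O_{\mattK}$, and then apply this to a fixed $\matZ$-basis of $\mathcal I$ and clear all denominators simultaneously. First I would fix a $\matZ$-basis $\{w_1,\ldots,w_n\}$ of $\mathcal I$, which exists because $\mathcal I$ is free of rank $n$ by hypothesis. Since every element $y\in\mathcal I$ has the form $y=\sum_{i=1}^n a_i w_i$ with $a_i\in\matZ$, and $\mathcal O_{\mattK}$ is closed under addition and under multiplication by integers, it suffices to produce a single $d\in\matZ-\{0\}$ with $d\,w_i\in\mathcal O_{\mattK}$ for every $i$; the inclusion $d\mathcal I\subseteq\mathcal O_{\mattK}$ then follows by $\matZ$-linearity.

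The core step is therefore the claim for one element. Given $w\in\matK$, it is algebraic over $\matQ$, hence satisfies a monic relation $w^m+c_{m-1}w^{m-1}+\cdots+c_0=0$ with $c_j\in\matQ$ (for instance its minimal polynomial over $\matQ$). Choosing a common denominator $b\in\matZ-\{0\}$ so that $bc_j\in\matZ$ for all $j$ and multiplying the relation by $b^m$ yields $(bw)^m+bc_{m-1}(bw)^{m-1}+\cdots+b^m c_0=0$, whose coefficients $b^{m-j}c_j=b^{m-j-1}(bc_j)\in\matZ$ are all integers. Thus $bw$ is a root of a monic integer polynomial, i.e. $bw\in\mathcal O_{\mattK}$, which proves the claim with $c=b$.

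Finally I would apply this to each basis element to obtain nonzero integers $d_1,\ldots,d_n$ with $d_i w_i\in\mathcal O_{\mattK}$, and set $d=d_1\cdots d_n$ (the least common multiple would work equally well). Then $d\,w_i=(d/d_i)(d_i w_i)\in\mathcal O_{\mattK}$ for every $i$ because $d/d_i\in\matZ$, and the reduction above gives $d\mathcal I\subseteq\mathcal O_{\mattK}$ with $d\neq 0$, as required. I do not expect a genuine obstacle here: the only real content is the denominator-clearing argument for a single element, and the passage from one element to the whole module is a routine use of the finite basis together with the ring structure of $\mathcal O_{\mattK}$.
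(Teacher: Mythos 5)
Your proof is correct, but it reaches the conclusion by a different mechanism than the paper. The paper fixes a primitive element $\alpha$ with $\matK=\matQ(\alpha)$, expands each basis element $\gamma_i$ of $\mathcal I$ in the power basis $\{1,\alpha,\ldots,\alpha^{n-1}\}$ with rational coordinates $a_{ij}=b_{ij}/c_{ij}$, and takes $d$ to be the least common multiple of all the denominators $c_{ij}$, so that $d\mathcal I\subseteq\matZ[\alpha]\subseteq\mathcal O_{\mattK}$; note that the final inclusion silently requires $\alpha$ to be chosen an algebraic integer (always possible, but unstated in the paper), since for a general primitive element $\matZ[\alpha]\not\subseteq\mathcal O_{\mattK}$. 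You instead reprove the standard single-element lemma --- for any $w\in\matK$ some nonzero integer multiple $bw$ is integral, via the monic denominator-clearing identity $(bw)^m+bc_{m-1}(bw)^{m-1}+\cdots+b^mc_0=0$ with coefficients $b^{m-j}c_j=b^{m-j-1}(bc_j)\in\matZ$ --- and then combine the scalars $d_1,\ldots,d_n$ for the basis elements by a product (or lcm). Your route buys self-containedness and slightly more generality: it needs no primitive element, it avoids the integrality-of-$\alpha$ subtlety the paper glosses over, and it never uses that the rank is exactly $n$, so it proves the statement for any finitely generated $\matZ$-submodule of $\matK$. What the paper's route buys is a single uniform denominator read off at once from the coordinate matrix, which is marginally more concrete but logically equivalent. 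All your steps --- the reduction to basis elements via $\matZ$-linearity and the ring structure of $\mathcal O_{\mattK}$, the coefficient computation, and the final rescaling $dw_i=(d/d_i)(d_iw_i)$ --- are sound.
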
 \begin{proof} Let $\alpha\in\matK$ such that
$\matK=\matQ(\alpha)$ and $\{1,\alpha,\cdots,\alpha^{n-1}\}$ is a
basis of $\matK$ over $\matQ$. Let $\{\gamma_1,\cdots, \gamma_n\}$
be a $\matZ$-basis of $\mathcal I$. For any $i$,
$\gamma_i=\sum_{j=0}^{n-1}a_{ij}\alpha^{j}$ where $a_{ij}\in\matQ$,
for all $i=1,\cdots,n$ and $j=0,1,\cdots,n-1$. Since
$a_{ij}\in\matQ$ for all $i,j$, it follows that
$a_{ij}=\frac{b_{ij}}{c_{ij}}$ with $b_{ij}, c_{ij} \in\matZ$ and
$c_{ij}\neq 0$ for all $i,j$. If $d=lcm\{c_{ij}; \,\,
i=1,\cdots,n,\, j=0,1,\cdots,n-1\}$, then $d\gamma_i \in
\matZ[\alpha]$ for all $i=1,\cdots,n$, what implies $d\mathcal I=d
\sum_{i=1}^{n}\matZ \gamma_i = \sum_{i=1}^{n}\matZ d \gamma_i
\subset \matZ [\alpha] \subset \mathcal O_{\mattK}$. \end{proof}

\begin{proposition}\label{detb} Let $\mathcal I\subseteq \matK$ be a nonzero free $\matZ$-module of rank $n$.
Then
\begin{equation}  \label{det} det(\sigma_{\alpha}(\mathcal I))= N(\mathcal I)^{2}
N_{\mattK|\mattQ}(\alpha)d_{\mattK}.\end{equation}
\end{proposition}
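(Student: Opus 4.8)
The plan is to separate the lattice determinant into two independent contributions: the scaling produced by the twist $\alpha$, and the scaling produced by passing from the ring of integers $\mathcal O_{\mattK}$ to the module $\mathcal I$. The first is a diagonal scaling that is easy to track; the second is the discriminant of a $\matZ$-basis, which is where Lemma \ref{grasi} enters.

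First I would fix a $\matZ$-basis $\{w_1,\ldots,w_n\}$ of $\mathcal I$ and invoke the preceding proposition, which realizes $\sigma_{\alpha}(\mathcal I)$ as a full-rank lattice with generator matrix $M$ whose $(i,j)$-entry is $\sqrt{\alpha_j}\,\sigma_j(w_i)$. Factoring $M=\Sigma D$ with $\Sigma=(\sigma_j(w_i))_{i,j}$ and $D=\mathrm{diag}(\sqrt{\alpha_1},\ldots,\sqrt{\alpha_n})$, and using that each $\alpha_j>0$, one obtains
\[
\det(\sigma_{\alpha}(\mathcal I))=\det(MM^{t})=(\det\Sigma)^{2}(\det D)^{2}=(\det\Sigma)^{2}\,N_{\mattK|\mattQ}(\alpha),
\]
since $(\det D)^{2}=\prod_j\alpha_j=N_{\mattK|\mattQ}(\alpha)$. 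This isolates the factor $N_{\mattK|\mattQ}(\alpha)$ and reduces the claim to showing that $(\det\Sigma)^{2}=\det\big(Tr_{\mattK|\mattQ}(w_i w_j)\big)$, the discriminant of the basis $\{w_i\}$, equals $N(\mathcal I)^{2}d_{\mattK}$.

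For this remaining factor I would use Lemma \ref{grasi} to choose $d\in\matZ\setminus\{0\}$ with $d\mathcal I\subseteq\mathcal O_{\mattK}$, so that the basis $\{dw_i\}$ expands in an integral basis of $\mathcal O_{\mattK}$ through an \emph{integer} change-of-basis matrix $C$. Then $\det\big(Tr_{\mattK|\mattQ}(dw_i\,dw_j)\big)=(\det C)^{2}d_{\mattK}$, and since $N(d\mathcal I)=[\mathcal O_{\mattK}:d\mathcal I]=|\det C|$ recovers precisely the fractional-ideal case of \cite{eva2}, the discriminant of $\{dw_i\}$ equals $N(d\mathcal I)^{2}d_{\mattK}$. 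Undoing the scaling, replacing $w_i$ by $dw_i$ multiplies every entry of $\Sigma$ by $d$ and hence $(\det\Sigma)^{2}$ by $d^{2n}$, while $N(d\mathcal I)=d^{n}N(\mathcal I)$; the factors $d^{2n}$ cancel and yield $(\det\Sigma)^{2}=N(\mathcal I)^{2}d_{\mattK}$. Substituting into the first display gives (\ref{det}).

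The step that needs the most care is precisely this last identification, because a free $\matZ$-module of rank $n$ need not be a fractional ideal, so $N(\mathcal I)$ cannot be read off directly from the ideal-theoretic norm. One must check that the index-based quantity $N(\mathcal I)=d^{-n}[\mathcal O_{\mattK}:d\mathcal I]$ is independent of the choice of $d$ and agrees with the usual norm when $\mathcal I$ happens to be a fractional ideal. Lemma \ref{grasi} is exactly what legitimizes this: it guarantees that some integer multiple of $\mathcal I$ lands inside $\mathcal O_{\mattK}$, where the index is finite and the known formula applies. Once that is secured, everything else is routine determinant bookkeeping.
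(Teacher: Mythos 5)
Your proof is correct and follows essentially the same route as the paper, which states Proposition \ref{detb} by reducing to the fractional-ideal case of \cite{eva2} via Lemma \ref{grasi}: scale $\mathcal I$ into $\mathcal O_{\mattK}$ by $d$, apply the known formula there, and cancel the $d^{2n}$ factors. You in fact make explicit two points the paper leaves implicit --- the factorization $\det(\sigma_{\alpha}(\mathcal I))=(\det\Sigma)^{2}N_{\mattK|\mattQ}(\alpha)$ and the well-definedness of $N(\mathcal I)=d^{-n}[\mathcal O_{\mattK}:d\mathcal I]$ for a module that is not a fractional ideal --- which is exactly the right bookkeeping.
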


\section{Rotated $D_n$-lattices}

 For $n \geq 3$, the $n$-dimensional lattice $D_n$ in $\matR^{n}$ is
described, in its standard form, as  $D_n=\{(x_1,\cdots,x_n) \in
\matZ^{n}; \sum_{i=1}^{n}x_i\,\,\, \mbox { is even}\}$. The set
$\beta= \{(-1,-1,0,\cdots,0),$ $(1, -1,0, \cdots,0),$ \linebreak
$(0,1,-1,0,\cdots,0),\cdots,(0,0,\cdots,1,-1)\}$ is a basis for
$D_n$ and $det(D_n)=4$ for all $n$.

As it is known, $D_n$ has a very efficient decoding algorithm and
also has the best lattice packing density for $n=3,4,5$
\cite{sloane}.

Using subfields of some cyclotomic fields we have obtained in
\cite{grasiele} full diversity rotated $D_n$-lattices.

\begin{proposition}\cite{grasiele}\label{idealI} Let $\matK=\matQ(\zeta_{2^r}+\zeta_{2^r}^{-1})$, $e_0=1$, $e_i=\zeta_{2^r}^{i}+\zeta_{2^r}^{-i}$ and $n=2^{r-2}$. If
$\mathcal I \subseteq \mathcal O_{\mattK}$ is the $\matZ$-module
with $\matZ$-basis $\{-2e_0 + 2 e_1 - 2e_2 + \cdots -2e_{n-2} +
e_{n-1},-e_{n-1},e_{n-2},\cdots, (-1)^{i+1}e_{n-1-i}, \cdots,
e_2,-e_1\}$ and $\alpha=2+e_1,$ then the lattice
$\frac{1}{\sqrt{2^{r-1}}}\sigma_{\alpha}(\mathcal I) \subseteq
\matR^{2^{r-2}}$ is a rotated $D_n$-lattice and $\mathcal I = e_1
\mathcal O_{\mattK}$ is a principal ideal of $\mathcal O_{\mattK}$.
\end{proposition}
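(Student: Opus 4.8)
The plan is to treat the two assertions separately, using throughout the explicit arithmetic of $\mathcal{O}_{\mattK}=\matZ[e_1]$, for which $\{e_0,e_1,\dots,e_{n-1}\}$ is a $\matZ$-basis (since each $e_k$ is a monic integer polynomial of degree $k$ in $e_1$, this basis and the power basis $\{1,e_1,\dots,e_1^{n-1}\}$ differ by a unitriangular integer matrix). Everything rests on three elementary identities. First, the multiplication rule $e_ie_j=e_{i+j}+e_{|i-j|}$, read with the convention $e_0=1$ so that in particular $e_1^2=e_2+2e_0$ and $e_1e_k=e_{k+1}+e_{k-1}$ for $k\ge 2$. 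Second, the reduction rules $e_{2n-k}=-e_k$ and $e_n=0$, which follow from $\zeta_{2^r}^{2^{r-1}}=-1$ and $n=2^{r-2}$ (so $2n=2^{r-1}$ and $\zeta_{2^r}^{n}=i$). Third, the trace values $Tr_{\mattK|\mattQ}(e_0)=n=2^{r-2}$ and $Tr_{\mattK|\mattQ}(e_k)=0$ for $1\le k\le n-1$; the latter is immediate from the Ramanujan-sum expression $Tr_{\mattK|\mattQ}(e_k)=\sum_{\gcd(a,2^r)=1}\zeta_{2^r}^{ak}$, which vanishes unless $2^{r-1}\mid k$.

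For the first assertion I would compute the Gram matrix. By the Gram-matrix formula for a twisted image, $\sigma_{\alpha}(\mathcal I)$ has $G=\left(Tr_{\mattK|\mattQ}(\alpha w_iw_j)\right)_{i,j=1}^{n}$ with $\alpha=2+e_1$ and $w_1,\dots,w_n$ the given basis. For each pair I would expand $\alpha w_iw_j$ by the multiplication rule, push every index into the range $0,\dots,n$ via $e_{2n-k}=-e_k$ and $e_n=0$, and then apply the trace values: since only $e_0$ has nonzero trace, $Tr_{\mattK|\mattQ}(\alpha w_iw_j)=2^{r-2}\,c_{ij}$, where $c_{ij}$ is the $e_0$-coefficient of the reduced form of $\alpha w_iw_j$. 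After the global scaling by $1/\sqrt{2^{r-1}}$ the Gram matrix becomes $\tfrac12\,(c_{ij})$. The tail generators $w_2,\dots,w_n=\pm e_{n-1},\dots,\pm e_1$ produce the tridiagonal path ($2$ on the diagonal, $-1$ on the adjacent off-diagonals, $0$ elsewhere), while the long alternating vector $w_1$ attaches as a branch vertex; matching the result against the Gram matrix of $D_n$ in the basis $\beta$ (diagonal $2$, a path with a single fork) shows that $\tfrac{1}{\sqrt{2^{r-1}}}\sigma_{\alpha}(\mathcal I)$ is a rotated $D_n$-lattice.

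For the second assertion I would work in the coordinates $\{e_0,\dots,e_{n-1}\}$. Reducing $w_1=-2e_0+2e_1-\dots-2e_{n-2}+e_{n-1}$ modulo $\langle e_1,\dots,e_{n-1}\rangle_{\matZ}$ leaves $-2e_0$, so $\mathcal I=\langle 2e_0,e_1,\dots,e_{n-1}\rangle_{\matZ}$ is precisely the index-two submodule of $\mathcal{O}_{\mattK}$ consisting of the elements whose $e_0$-coordinate is even; in particular $[\mathcal{O}_{\mattK}:\mathcal I]=2$. Multiplying the basis of $\mathcal{O}_{\mattK}$ by $e_1$ gives $e_1e_0=e_1$, $e_1e_1=e_2+2e_0$, $e_1e_k=e_{k+1}+e_{k-1}$ for $2\le k\le n-2$, and $e_1e_{n-1}=e_{n-2}$, each of which has even $e_0$-coordinate, so $e_1\mathcal{O}_{\mattK}\subseteq\mathcal I$. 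Finally $[\mathcal{O}_{\mattK}:e_1\mathcal{O}_{\mattK}]=|N_{\mattK|\mattQ}(e_1)|=2$, since $N_{\matQ(\zeta_{2^r})|\matQ}(\zeta_{2^r}+\zeta_{2^r}^{-1})=4$ and $e_1$ is real forces $N_{\mattK|\mattQ}(e_1)^2=4$. As $e_1\mathcal{O}_{\mattK}\subseteq\mathcal I$ and both have index $2$ in $\mathcal{O}_{\mattK}$, they coincide, so $\mathcal I=e_1\mathcal{O}_{\mattK}$ is principal.

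Once the three identities are in hand the remaining work is routine, so the main obstacle is the bookkeeping in the $w_1$-row and $w_1$-column of $G$: there the alternating sum of length about $n$ meets the boundary reductions $e_n=0$ and $e_{2n-k}=-e_k$, and one must verify that exactly the fork entry $-1$, the diagonal entry $2$, and zeros elsewhere survive, so that the matrix is recognized as a genuine $D_n$ Gram matrix rather than merely a tridiagonal relative.
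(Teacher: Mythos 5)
Your proof is correct, but it takes a genuinely different route from the one this paper (and its companion \cite{grasiele}) uses for results of this type. Note first that Proposition \ref{idealI} is stated here without proof, as a citation; the authors' method is displayed in the proofs of the analogous Propositions \ref{rotacionado11} and \ref{rotacionado22}: they compute only the \emph{diagonal} traces $Tr_{\mattK|\mattQ}(\alpha w_iw_i)$, observe these are even, invoke Remark \ref{even} to place the scaled lattice inside the rotated copy $R(D_n)$ of the largest even sublattice of the rotated $\matZ^{n}$-lattice $\frac{1}{\sqrt{2^{r-1}}}\sigma_{\alpha}(\mathcal O_{\mattK})$, and then force equality by comparing determinants via Proposition \ref{detb} and the discriminant $d_{\mattK}=2^{(r-1)2^{r-2}-1}$. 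You instead compute the full Gram matrix and match it entry by entry against the Gram matrix of $D_n$ in the basis $\beta$. The paper's route buys economy (only $n$ diagonal traces plus one norm/determinant computation, no off-diagonal boundary bookkeeping); yours buys explicitness (it exhibits the $D_n$ diagram concretely, with no appeal to the determinant formula or to the discriminant), and your principality argument --- $e_1\mathcal O_{\mattK}\subseteq\mathcal I$ with both of index $2$ in $\mathcal O_{\mattK}$, using $|N_{\mattK|\mattQ}(e_1)|=2$ --- is clean and self-contained. For the record, the bookkeeping you flag in the $w_1$-row does go through: writing $c(x,y)$ for the $e_0$-coefficient of $(2+e_1)xy$, bilinearity reduces everything to the table $c(e_k,e_l)=4,2,0$ according as $k=l$, $|k-l|=1$, $|k-l|\geq 2$ (for $1\leq k,l\leq n-1$, the boundary reductions $e_n=0$, $e_{2n-k}=-e_k$ never producing an $e_0$-term there), together with $c(e_0,e_0)=c(e_0,e_1)=2$ and $c(e_0,e_l)=0$ for $l\geq 2$; the alternating coefficients $\pm2$ of $w_1$ then cancel against every $e_k$ except $e_{n-2}$, where the terminal coefficient $+1$ of $e_{n-1}$ breaks the cancellation and leaves $c(w_1,e_{n-2})=-2$, while $c(w_1,w_1)=4$ --- after the factor $\frac{1}{2}$ this is exactly the diagonal $2$, the single fork $-1$ at the third basis vector, and zeros elsewhere, matching $\beta$. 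The only cosmetic omission is the one-line check that $\alpha=2+e_1$ is totally positive (its conjugates are $2+2\cos\theta>0$), needed for $\sigma_{\alpha}$ to be defined.
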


\begin{proposition} \cite{grasiele} \label{rotacionado} Let $\matK=\matQ(\zeta_p + \zeta_p^{-1})$, $e_i=\zeta_{p}^{i}+\zeta_{p}^{-i}$  and
$n=\frac{p-1}{2}$. If $\mathcal I \subseteq \mathcal O_{\mattK}$ is
the $\matZ$-module with $\matZ$-basis $\{-e_1 - 2e_2 - \cdots -2e_n,
e_1,e_2,\cdots,e_{n-1}\}$ and $\alpha=2-e_1$, then the lattice
$\frac{1}{\sqrt{p}}\sigma_{\alpha}(\mathcal I) \subseteq
\matR^{\frac{p-1}{2}}$ is a rotated $D_n$-lattice. The
$\matZ$-module $\mathcal I \subseteq \mathcal O_{\mattK}$ is not an
ideal of $\mathcal O_{\mattK}$. \end{proposition}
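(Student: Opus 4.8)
The plan is to verify two separate assertions: first, that the image lattice $\frac{1}{\sqrt{p}}\sigma_{\alpha}(\mathcal I)$ is indeed a rotated copy of $D_n$ for $n=\frac{p-1}{2}$; and second, that the $\matZ$-module $\mathcal I$ is not an ideal of $\mathcal O_{\mattK}$. Since a rotated $D_n$-lattice is characterized up to orthogonal transformation by its Gram matrix, the core of the first part is to compute the Gram matrix $G = \left( Tr_{\mattK|\mattQ}(\alpha w_i w_j) \right)_{i,j=1}^{n}$ for the given $\matZ$-basis $\{w_1,\ldots,w_n\}$ with $w_1 = -e_1 - 2e_2 - \cdots - 2e_n$ and $w_{k} = e_{k-1}$ for $k\ge 2$, using $\alpha = 2 - e_1$, and then show (after scaling by $1/p$) that it coincides with the Gram matrix of $D_n$ in some basis.

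First I would set up the arithmetic of $\matK=\matQ(\zeta_p+\zeta_p^{-1})$ needed for the trace computations. The key ingredient is the explicit value of $Tr_{\mattK|\mattQ}(e_i e_j)$ and $Tr_{\mattK|\mattQ}(e_1 e_i e_j)$, which reduce to traces of the power-sum elements $e_k = \zeta_p^k + \zeta_p^{-k}$. Using $e_i e_j = e_{i+j} + e_{|i-j|}$ (with $e_0 = 2$) and the standard fact that $Tr_{\mattK|\mattQ}(e_k) = -1$ for $1 \le k \le n$ while $Tr_{\mattK|\mattQ}(e_0)=Tr_{\mattK|\mattQ}(2) = 2n = p-1$, every entry of $G$ becomes an explicit integer. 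I would then assemble these entries into the $n\times n$ matrix and exhibit an integer unimodular change of basis (equivalently, verify the Gram matrix equals $B D_n$-Gram$B^t$ for an integer matrix $B$ of determinant $\pm 1$) carrying it to the standard $D_n$ form; the scaling factor $\frac{1}{\sqrt{p}}$ is what turns the trace-inflated entries back into the entries $\pm 1, \pm 2$ of the $D_n$ Gram matrix, and a consistency check is that $\det\left(\frac{1}{\sqrt p}\sigma_\alpha(\mathcal I)\right) = 4 = \det D_n$, which follows from Proposition \ref{detb} together with $N(\mathcal I)$, $N_{\mattK|\mattQ}(\alpha)$ and $d_{\mattK}=p^{n-1}$.

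For the second assertion, that $\mathcal I$ is not an ideal of $\mathcal O_{\mattK}$, I would argue by comparing index/norm data. If $\mathcal I$ were a (nonzero integral) ideal of $\mathcal O_{\mattK}$, then $N(\mathcal I) = [\mathcal O_{\mattK}:\mathcal I]$ would be a positive integer and $d_{min}$ information forces a specific relation; more directly, by Proposition \ref{detb} we have $\det(\sigma_\alpha(\mathcal I)) = N(\mathcal I)^2 N_{\mattK|\mattQ}(\alpha) d_{\mattK}$, and combining this with the already-computed determinant of the rotated $D_n$-lattice pins down the value of $N(\mathcal I)$. The plan is to show this forces $N(\mathcal I)$ to be a value incompatible with $\mathcal I$ being an ideal — for instance by exhibiting an element of $\mathcal O_{\mattK}$, such as $1$ or a suitable $e_k$, whose product with a generator of $\mathcal I$ fails to lie in $\mathcal I$, thereby violating the defining absorption property $\mathcal O_{\mattK}\,\mathcal I \subseteq \mathcal I$.

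The main obstacle I anticipate is the Gram-matrix bookkeeping: correctly handling the first basis vector $w_1 = -e_1 - 2e_2 - \cdots - 2e_n$, whose products $\alpha w_1 w_j$ generate long sums of $e_k$'s that must be folded back using $e_i e_j = e_{i+j}+e_{|i-j|}$ and the relation $e_{p-k}=e_k$ (periodicity and the wrap-around at $n = \frac{p-1}{2}$, where $e_{n+1}, \ldots, e_{2n}$ must be re-expressed via $e_{p-k}$). Keeping the index reductions consistent through the trace is delicate and is where an error is most likely to creep in; once all entries are reduced to integers, exhibiting the unimodular transformation to the standard $D_n$ Gram matrix and reading off the non-ideal obstruction should be routine.
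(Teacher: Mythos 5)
Your outline is correct and can be completed, but it follows a genuinely different route from the one this paper uses. Note first that the paper gives no proof of Proposition \ref{rotacionado} at all: it is imported from \cite{grasiele}, so the fair comparison is with the paper's proofs of the analogous Propositions \ref{rotacionado11} and \ref{rotacionado22}. There the authors never assemble the full Gram matrix: they use the fact from \cite{Oggier} that $\frac{1}{\sqrt{p}}\sigma_{\alpha}(\mathcal O_{\mattK})$ is already a rotated $\matZ^{n}$-lattice, check only that the generators of $\mathcal I$ have even squared norm, invoke Remark \ref{even} (the even-norm vectors of $\matZ^{n}$ form exactly $D_n$, its largest even sublattice) to get $\frac{1}{\sqrt{p}}\sigma_{\alpha}(\mathcal I)\subseteq R(D_n)$, and finish with Proposition \ref{detb}: since $N(\mathcal I)=2$ (the change-of-basis matrix from $\{e_1,\ldots,e_n\}$ has determinant $\pm 2$), $N_{\mattK|\mattQ}(\alpha)=p$ and $d_{\mattK}=p^{n-1}$, one gets $\det(\sigma_{\alpha}(\mathcal I))=4p^{n}$, hence $\det\Lambda=4=\det D_n$ after scaling, and the inclusion is an equality. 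Your plan --- compute all entries $Tr_{\mattK|\mattQ}(\alpha w_iw_j)$ and exhibit an integral unimodular congruence to a Gram matrix of $D_n$ --- is the heavier path, but it does close, and in fact closes cleanly: $\frac{1}{p}Tr_{\mattK|\mattQ}(\alpha e_ie_j)$ is tridiagonal with diagonal $(2,\ldots,2,1)$ and off-diagonal entries $-1$, which is the Gram matrix of $x_i=u_i-u_{i+1}$ $(i<n)$, $x_n=u_n$ for an orthonormal set $\{u_i\}$; then $-e_1-2e_2-\cdots-2e_n$ telescopes to $-u_1-u_2$, so your basis maps exactly onto the standard basis $\beta$ of $D_n$ displayed at the start of Section 3 (this is presumably the computation of \cite{grasiele}). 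Be aware, though, that your closing remark that exhibiting the unimodular transformation ``should be routine'' understates matters: the determinant match is only a necessary condition (non-isometric lattices can share a determinant), so producing that congruence is the actual crux of your route, whereas the paper's parity-plus-determinant argument bypasses it entirely --- which is exactly why it transfers painlessly to the compositum fields.

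Two repairs to your second part. Multiplying by $1$ can never violate absorption, so discard that candidate; the working choice, in the pattern of the paper's Proposition \ref{dist}, is $e_1e_{n-1}=e_n+e_{n-2}$: modulo $\mathcal I$ one has $e_1\equiv\cdots\equiv e_{n-1}\equiv 0$ and $2e_n\equiv 0$, so the index-$2$ quotient $\mathcal O_{\mattK}/\mathcal I$ is generated by the class of $e_n$ and $e_n\notin\mathcal I$; since $e_1,e_{n-1},e_{n-2}\in\mathcal I$, it follows that $e_1e_{n-1}\notin\mathcal I$, so $\mathcal I$ is not an ideal. Your alternative ``norm incompatibility'' idea is also not immediate as stated: $N(\mathcal I)=2$ is impossible for an integral ideal only if $2$ admits no prime of residue degree $1$ in $\mathcal O_{\mattK}$, which is true here but is precisely the nontrivial content of Lemma \ref{lemmaa} (and requires $n\notin\{1,2,4\}$, i.e. $p\geq 7$); the elementary absorption argument is the intended one.
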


In what follows we consider the compositum of the fields
$\matQ(\zeta_{2^r}+\zeta_{2^r}^{-1})$ and $\matQ(\zeta_p +
\zeta_p^{-1})$, $p$ prime, to present new constructions of rotated
$D_n$-lattices.  These lattices are obtained via free
$\matZ$-modules of rank $n$ which are not ideals.

\begin{remark}\label{even} A remark to be used in the next proposition is that if $R$ is a rotation in $\matR^{n}$ and
${\bm x} \in R(\matZ^{n})$ such that $\|{\bm x}\|^{2}$ is even, then
${\bm x} \in R(D_n)$. In fact, if we start from  ${\bm y} \in
\matZ^{n}$ such that $||{\bm y}||^{2} = y_1^{2}+\cdots + y_n^{2} = 2
a$ for some $a \in \matN$, note that
$\left(\sum_{i=1}^{n}y_i\right)^{2} = \sum_{i=1}^{n}y_i^{2} + 2t =
2a + 2t$, $t \in \matZ$, is even. Then, $\sum_{i=1}^{n}y_i$ is even
and ${\bm y} \in D_n$.  Since a rotation $R$ preserves the norm of a
vector, given  ${\bm x}=R(\bm y),$ ${\bm y} \in \matZ^{n}$ such that
$||{\bm x}||^{2}$ is even, then ${\bm x} \in R(D_n)$. Particularly,
$D_n$ is the largest even sublattice of $\matZ^{n}$.
\end{remark}

\begin{proposition}\label{rotacionado11} Let $\matK_1=\matQ(\zeta_{2^r}+\zeta_{2^r}^{-1})$, $r \geq 3$, $e_0=1$,
$e_i=\zeta_{2^r}^{i}+\zeta_{2^r}^{-i}$, $i\geq 1$, and
$n_1=2^{r-2}$. Let $\matK_2=\matQ(\zeta_p + \zeta_p^{-1})$, $p \geq
5$ odd prime, $b_i=\zeta_{p}^{i}+\zeta_{p}^{-i}$, $i\geq 1$, and
$n_2=\frac{p-1}{2}$. If $\matK=\matK_1 \matK_2$ is the compositum of
$\matK_1$ and $\matK_2$, $\mathcal I \subseteq \mathcal O_{\mattK}$
is the $\matZ$-module with $\matZ$-basis $$\gamma=\{e_0b_1, e_0b_2,
\cdots, e_0b_{n_2-1}, 2e_0b_{n_2}, e_1b_1, \cdots, e_1b_{n_2},
\cdots, e_{n_1}b_1,\cdots, e_{n_1}b_{n_2}\},$$ $\alpha_1=(2-e_1)$,
$\alpha_2=(2-b_1)$, $\alpha=(2-e_1)(2-b_1)$, then the lattice
$\frac{1}{\sqrt{2^{r-1}p}}\sigma_{\alpha}(\mathcal I) \subseteq
\matR^{n}$ is a rotated $D_n$-lattice.
\end{proposition}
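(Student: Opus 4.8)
The plan is to exhibit $\frac{1}{\sqrt{2^{r-1}p}}\sigma_{\alpha}(\mathcal I)$ as the even sublattice of a rotated $\matZ^{n}$ and then quote Remark \ref{even}. I would first settle the arithmetic of the compositum. Since the discriminant of $\matK_1$ is a power of $2$ and that of $\matK_2$ is a power of $p$, the fields are linearly disjoint over $\matQ$; hence $[\matK:\matQ]=n_1n_2=n$, the coprimality of the discriminants gives $\mathcal O_{\mattK}=\mathcal O_{\mattK_1}\mathcal O_{\mattK_2}$ with the products $\{e_ib_j\}$ forming a $\matZ$-basis, and the trace is multiplicative: $Tr_{\mattK|\mattQ}(x_1x_2)=Tr_{\mattK_1|\mattQ}(x_1)\,Tr_{\mattK_2|\mattQ}(x_2)$ for $x_1\in\matK_1,\ x_2\in\matK_2$. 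I would also record that $\gamma$ is obtained from the integral basis $\{e_ib_j\}$ by replacing the single vector $e_0b_{n_2}$ with $2e_0b_{n_2}$, so $[\mathcal O_{\mattK}:\mathcal I]=2$.

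Next I would compute the Gram matrix. Writing $\alpha=\alpha_1\alpha_2$ and using trace multiplicativity, $Tr_{\mattK|\mattQ}(\alpha\,e_ib_j\,e_kb_l)=Tr_{\mattK_1|\mattQ}(\alpha_1e_ie_k)\,Tr_{\mattK_2|\mattQ}(\alpha_2b_jb_l)$, so the Gram matrix of $\frac{1}{\sqrt{2^{r-1}p}}\sigma_{\alpha}$ in the basis $\{e_ib_j\}$ is the Kronecker product $\big(\tfrac1{2^{r-1}}G_1\big)\otimes\big(\tfrac1p G_2\big)$, where $G_1=(Tr_{\mattK_1|\mattQ}(\alpha_1e_ie_k))$ and $G_2=(Tr_{\mattK_2|\mattQ}(\alpha_2b_jb_l))$. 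The single-field constructions of \cite{grasiele} (Propositions \ref{idealI} and \ref{rotacionado} describe their even sublattices) show that $\tfrac1{2^{r-1}}G_1$ and $\tfrac1p G_2$ are Gram matrices of rotated integer lattices, i.e. $P_1^t(\tfrac1{2^{r-1}}G_1)P_1=I_{n_1}$ and $P_2^t(\tfrac1p G_2)P_2=I_{n_2}$ for some $P_i\in GL_{n_i}(\matZ)$. Since $(P_1\otimes P_2)^t\big(\tfrac1{2^{r-1}}G_1\otimes\tfrac1p G_2\big)(P_1\otimes P_2)=I_{n_1}\otimes I_{n_2}=I_n$ with $P_1\otimes P_2$ unimodular, the lattice $\Lambda_0:=\frac{1}{\sqrt{2^{r-1}p}}\sigma_{\alpha}(\mathcal O_{\mattK})$ is a rotated $\matZ^{n}$; in particular it is integral.

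It then remains to identify $\frac{1}{\sqrt{2^{r-1}p}}\sigma_{\alpha}(\mathcal I)$ with the even sublattice of $\Lambda_0$, which by Remark \ref{even} is a rotated $D_n$. Consider the additive map $\phi\colon\Lambda_0\to\matF_2$, $\phi(v)=\|v\|^2\bmod 2$ (additivity holds since $2\langle v,w\rangle$ is even). Because the squared norm of a pure product splits as $\|\tfrac1{\sqrt{2^{r-1}p}}\sigma_\alpha(e_ib_j)\|^2=\big(\tfrac1{2^{r-1}}Tr_{\mattK_1|\mattQ}(\alpha_1e_i^2)\big)\big(\tfrac1p Tr_{\mattK_2|\mattQ}(\alpha_2b_j^2)\big)$, a product of two integers, one gets $\phi(e_ib_j)=\phi_1(e_i)\phi_2(b_j)$, where $\phi_1,\phi_2$ are the corresponding parity functionals for $\matK_1,\matK_2$. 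The crux — and the step I expect to be the main obstacle — is the mod-$2$ evaluation of these diagonal traces, namely to prove that $\phi_1(e_i)=1$ exactly when $i=0$ and $\phi_2(b_j)=1$ exactly when $j=n_2$. Granting this, $\phi(e_ib_j)=1$ only for $(i,j)=(0,n_2)$, whence every generator of $\gamma$ lies in $\ker\phi$ (so $\mathcal I\subseteq\ker\phi$) while $\phi(e_0b_{n_2})=1$ forces $[\mathcal O_{\mattK}:\ker\phi]=2$. Comparing with $[\mathcal O_{\mattK}:\mathcal I]=2$ yields $\mathcal I=\ker\phi$, and therefore $\frac{1}{\sqrt{2^{r-1}p}}\sigma_{\alpha}(\mathcal I)$ is a rotated $D_n$-lattice.

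To settle the two parity statements I would reduce to the factor fields rather than compute blindly. For $\matK_2$, rewriting the basis in Proposition \ref{rotacionado} by means of $\sum_{j=1}^{n_2}b_j=-1$ shows its even sublattice is $\langle b_1,\dots,b_{n_2-1},2b_{n_2}\rangle$, so $b_{n_2}$ is the unique generator of odd norm; this is $\phi_2(b_j)=1\Leftrightarrow j=n_2$. For $\matK_1$, the automorphism $\zeta_{2^r}\mapsto\zeta_{2^r}^{2^{r-1}+1}$ sends $e_1\mapsto-e_1$, hence $2+e_1\mapsto\alpha_1$, carrying the construction of Proposition \ref{idealI} to the present one; its even sublattice $e_1\mathcal O_{\mattK_1}$ equals $\langle 2e_0,e_1,\dots,e_{n_1-1}\rangle$ (via $e_1e_k=e_{k+1}+e_{k-1}$ and $e_1^2=2e_0+e_2$), so $e_0$ is the unique generator of odd norm, i.e. $\phi_1(e_i)=1\Leftrightarrow i=0$. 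This completes the plan.
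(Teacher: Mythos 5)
Your proof is correct, and while it shares its first half with the paper's argument, it closes in a genuinely different way. Common ground: both use linear disjointness and coprimality of discriminants to get $\mathcal O_{\mattK}=\mathcal O_{\mattK_1}\mathcal O_{\mattK_2}$ with basis $\{e_ib_j\}$, multiplicativity of the trace, and the fact that $\Lambda_0=\frac{1}{\sqrt{2^{r-1}p}}\sigma_{\alpha}(\mathcal O_{\mattK})$ is a rotated $\matZ^{n}$-lattice (the paper simply cites \cite{Oggier}; your Kronecker-product derivation from the two single-field Gram matrices is equivalent and fine, granting the single-field rotated-$\matZ^{n_i}$ facts from \cite{and,Oggier} that the paper also takes as known). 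The divergence is the endgame. The paper computes the diagonal traces explicitly, applies Remark \ref{even} to each vector of $\gamma$ to get $\Lambda\subseteq R(D_n)$, and then upgrades containment to equality by the determinant computation $\det(\sigma_{\alpha}(\mathcal I))=N(\mathcal I)^{2}N_{\mattK|\mattQ}(\alpha)d_{\mattK}=4(2^{r-1}p)^{n}$ via Proposition \ref{detb}, which requires $d_{\mattK_1}$ from \cite{lopes}, $d_{\mattK_2}$, and the norms of $\alpha_1,\alpha_2$. You avoid determinants and discriminant data entirely: the parity map $\phi(v)=\|v\|^{2}\bmod 2$ is a homomorphism on the integral lattice $\Lambda_0$ whose kernel is exactly the rotated $D_n$ (Remark \ref{even}), $\phi(e_ib_j)=1$ precisely for $(i,j)=(0,n_2)$, and comparing the two index-$2$ subgroups gives $\mathcal I=\ker\phi$. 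This buys a cleaner and more structural conclusion -- it explains why doubling exactly the unique odd-norm basis vector $e_0b_{n_2}$ produces $D_n$ -- at the cost of needing the parity of \emph{every} diagonal entry including the odd one, whereas the paper's determinant route would survive with less precise diagonal information. Your mod-$2$ parities, obtained from the even sublattices of the single-field constructions (Propositions \ref{idealI} and \ref{rotacionado}, with the automorphism $\zeta_{2^r}\mapsto\zeta_{2^r}^{2^{r-1}+1}$, $e_i\mapsto(-1)^{i}e_i$, transporting $2+e_1$ to $2-e_1$), agree with the trace values $2n_1$, $4n_1$, $p$, $2p$ quoted in the paper, so both proofs ultimately rest on the same arithmetic. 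Two small points worth making explicit: your inference that a rotated $D_{n_i}$ of index $2$ inside a rotated $\matZ^{n_i}$ must be \emph{the} even sublattice holds because $D_n$ is an even lattice, and you should say so; and the printed basis $\gamma$ runs through $e_{n_1}$, which is actually $0$ (since $\zeta_{2^r}^{2^{r-2}}+\zeta_{2^r}^{-2^{r-2}}=0$) -- the intended index range is $0\le i\le n_1-1$, the convention your proof implicitly and correctly adopts.
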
  \begin{proof} From \cite{and} and \cite{Oggier} we have:

$Tr_{\mattK_1|\mattQ}(\alpha_1e_ie_i)=\left\{\begin{array}{l} 2n_1,\mbox{ if } i=0; \\
4n_1,\mbox{ if } i\neq 0.\end{array}\right.$  $
Tr_{\mattK_2|\mattQ}(\alpha_2 b_jb_j)=\left\{\begin{array}{l}
p,\mbox{ if } j=n_2; \\ 2p,\mbox{ if } j \neq
n_2.\end{array}\right.$

Using the fact that
$Tr_{\mattK|\mattQ}(\alpha_1e_ib_j\alpha_2e_kb_l) =
Tr_{\mattK_1|\mattQ}(\alpha_1e_ie_k)Tr_{\mattK_2|\mattQ}(\alpha_2
b_j b_l)$  we may conclude:

 $Tr_{\mattK|\mattQ}(\alpha e_i b_j e_i
b_j)=\left\{\begin{array}{cl} 2n_1 p, & \,\, \mbox{ if } i=0,j=n_2;
\\ 8 n_1 p,  & \,\, \mbox{ if } i\neq 0, j\neq n_2 \\ 4 n_1 p,  & \,\, \mbox{ otherwise}.
\end{array}\right.$

Considering the Gram matrix $G$ associated to the $\matZ$-basis
$\gamma$ we may assert that the value of the elements of its
diagonal are either $Tr_{\mattK|\mattQ}(\alpha 2 e_0 b_{n_2} 2 e_0
b_{n_2})$ \linebreak $=8 n_1 p$ or $Tr_{\mattK|\mattQ}(\alpha e_i
b_j e_i b_j) \in \{4 n_1 p,8 n_1 p\}$ for $i \neq 0$. From
\cite{Oggier}, we have that
$\Lambda_1=\frac{1}{\sqrt{2^{r-1}p}}\sigma_{\alpha}(\mathcal
O_{\mattK})$ is a rotated $\matZ^{n}$-lattice. Since $\mathcal I
\subseteq \mathcal O_{\mattK}$,  then
$\Lambda=\frac{1}{\sqrt{2^{r-1}p}}\sigma_{\alpha}(\mathcal I)
\subseteq \Lambda_1$. By Remark \ref{even}, $\sigma_{\alpha}(2e_0
b_{n_2} 2 e_0 b_{n_2}), \sigma_{\alpha}(e_i b_j e_i b_j) \in
R(D_n)$, where $R(D_n)$ is a rotated version of $D_n$ contained in
$\Lambda_1$. Then, $\Lambda \subseteq R(D_n)$. Moreover, $\Lambda =
R(D_n)$. In fact, since $d_{\mattK}=2^{(r-1)2^{r-2}-1}$
\cite{lopes}, by Proposition \ref{detb}, $
\det(\sigma_{\alpha}(\mathcal I)) = N(\mathcal I)^{2}
N_{\mattK|\mattQ}(\alpha)d_{\mattK} =
2^{2}(N_{\mattK_1|\mattQ}(\alpha_1))^{n_2}(N_{\mattK_2|\mattQ}(\alpha_2))^{n_1}
d_{\mattK_1}^{n_2}d_{\mattK_2}^{n_1}=$  $2^{2} 2^{n_2}p^{n_1}
2^{n_2((r-1)2^{r-2}-1)} p^{n_1\frac{p-3}{2}} = 4
 (2^{r-1}p)^{n_1n_2}.$  Therefore, $\Lambda =
R(D_n)$ is a rotated $D_n$-lattice. \end{proof}

\begin{proposition}\label{dist} The $\matZ$-module $\mathcal I \subseteq \mathcal O_{\mattK}$ given in Proposition \ref{rotacionado11} is not an ideal
of $\mathcal O_{\mattK}$. \end{proposition}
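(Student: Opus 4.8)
The plan is to show directly that $\mathcal I$ fails to be closed under multiplication by an element of $\mathcal O_{\mattK}$, which already forces $\mathcal I$ not to be an ideal. Since the discriminants $d_{\mattK_1}$ and $d_{\mattK_2}$ are powers of $2$ and of $p$ respectively, they are coprime, so $\mathcal O_{\mattK}=\mathcal O_{\mattK_1}\mathcal O_{\mattK_2}$ and $\{e_i b_j:\ 0\le i\le n_1-1,\ 1\le j\le n_2\}$ is a $\matZ$-basis of $\mathcal O_{\mattK}$, where $\{e_0,\dots,e_{n_1-1}\}$ is a $\matZ$-basis of $\mathcal O_{\mattK_1}$ and $\{b_1,\dots,b_{n_2}\}$ one of $\mathcal O_{\mattK_2}$ (the latter because $1+\sum_{j=1}^{n_2}b_j=0$ makes it a unimodular change from the standard basis $\{1,b_1,\dots,b_{n_2-1}\}$). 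First I would record that, read against this basis, the given basis $\gamma$ of $\mathcal I$ is obtained from the basis of $\mathcal O_{\mattK}$ by replacing the single vector $e_0 b_{n_2}$ with $2e_0 b_{n_2}$ and leaving all others unchanged. Hence $\mathcal I$ is the index-$2$ subgroup
$$\mathcal I=\Big\{\textstyle\sum_{i,j}c_{ij}\,e_i b_j\in\mathcal O_{\mattK}\ :\ c_{ij}\in\matZ,\ c_{0,n_2}\ \text{even}\Big\},$$
so membership in $\mathcal I$ is governed solely by the parity of the $e_0 b_{n_2}$-coordinate.

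Next I would exhibit one product that leaves $\mathcal I$. The element $b_{n_2-1}=e_0 b_{n_2-1}$ lies in $\mathcal I$ (its $e_0 b_{n_2}$-coordinate is $0$) and $b_1\in\mathcal O_{\mattK}$, so it suffices to compute the $e_0 b_{n_2}$-coordinate of $b_1 b_{n_2-1}$. Applying the multiplication rule $b_i b_j=b_{i+j}+b_{|i-j|}$ in $\mathcal O_{\mattK_2}$ (with $b_{-k}=b_k$, $b_0=2$, and the reduction $b_k=b_{p-k}$) gives $b_1 b_{n_2-1}=b_{n_2}+b_{n_2-2}$, whose $e_0$-part is $b_{n_2}+b_{n_2-2}$. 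The term $b_{n_2}$ contributes $1$ to the $b_{n_2}$-coordinate, while $b_{n_2-2}$ contributes an even amount: it is $0$ when $n_2\ge 3$ (i.e. $p\ge 7$), and when $n_2=2$ (i.e. $p=5$) one has $b_{n_2-2}=b_0=2=-2\sum_{j=1}^{n_2}b_j$, contributing $-2$. In either case the $e_0 b_{n_2}$-coordinate of $b_1 b_{n_2-1}$ is odd, so $b_1 b_{n_2-1}\notin\mathcal I$.

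Combining the two points, $b_{n_2-1}\in\mathcal I$ and $b_1\in\mathcal O_{\mattK}$ but $b_1 b_{n_2-1}\notin\mathcal I$, which shows $\mathcal O_{\mattK}\cdot\mathcal I\not\subseteq\mathcal I$ and hence that $\mathcal I$ is not an ideal. The step I expect to require the most care is the coordinate bookkeeping in $\mathcal O_{\mattK_2}$: one must correctly apply the index reduction $b_{p-k}=b_k$, check that no other summand of $b_1 b_{n_2-1}$ reaches the $b_{n_2}$-coordinate, and handle the boundary case $p=5$, where $b_{n_2-2}$ collapses to $b_0=2$ and must be re-expanded in the basis $\{b_1,\dots,b_{n_2}\}$ via $1+\sum_j b_j=0$ before reading off the parity. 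Everything else is routine, and the argument is uniform in $r\ge 3$, since multiplication by $b_1$ only involves the $\matK_2$-factor and never touches the exponent $r$.
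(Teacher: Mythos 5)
Your proof is correct and takes essentially the same route as the paper: the paper likewise observes that $\mathcal I$ has index $2$ in $\mathcal O_{\mattK}$, so $e_0 b_{n_2} \notin \mathcal I$, and then uses the very same product $e_0 b_{n_2-1}\cdot e_0 b_1 = e_0 b_{n_2} + e_0 b_{n_2-2}$ to conclude that $\mathcal I$ is not closed under multiplication. Your parity-of-coordinate formulation and the explicit handling of the boundary case $p=5$ (where $b_{n_2-2}=b_0=2$ must be re-expanded in the basis $\{b_1,\dots,b_{n_2}\}$) merely make explicit what the paper leaves implicit.
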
 \begin{proof} Indeed, if
$e_0 b_{n_2} \in \mathcal I$, then $\mathcal I = \mathcal
O_{\mattK}$. Since $\left|\frac{\mathcal O_{\mattK}}{\mathcal
I}\right| = 2,$ then $e_0b_{n_2} \not\in I.$  Note that $e_0
b_{n_2-1} e_0 b_{1} = e_0 b_{n_2} + e_0 b_{n_2-2}$ is not in
$\mathcal I$. In fact, if $e_0 b_{n_2-1} e_0 b_1 \in \mathcal I$,
then $e_0 b_{n_2} = e_0 b_{{n_2}-1} e_0 b_1 -e_0 b_{n_2-2} \in
\mathcal I$ and this does not happen. \end{proof}

\begin{proposition} If $\Lambda =\frac{1}{\sqrt{2^{r-1}p}}\sigma_{\alpha}(\mathcal I) \subseteq
\matR^{n}$ with $\alpha$ and $\mathcal I$ as in Proposition
\ref{rotacionado11}, then the relative minimum product distance is
${d_{p,rel}}(\Lambda)= 2^{\frac{-n r + n_2}{2}} p^{\frac{-n +
n_1}{2}}.$
\end{proposition}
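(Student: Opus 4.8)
The plan is to reduce the computation of $d_{p,rel}(\Lambda)$ to that of the ordinary minimum product distance $d_{p,min}(\Lambda)$ together with the length of a shortest vector. By Proposition \ref{rotacionado11} the lattice $\Lambda$ is a rotated copy $R(D_n)$ of $D_n$, and since a rotation preserves norms, its minimum squared norm equals that of $D_n$, namely $2$; hence the shortest vectors of $\Lambda$ have length $\sqrt{2}$. Scaling a lattice by a factor $c>0$ multiplies every coordinate by $c$, and therefore multiplies each product $\prod_{i=1}^{n}|y_i|$ by $c^{n}$. Thus the normalization of Definition \ref{relative}, which rescales $\Lambda$ so that its minimum norm becomes $1$, uses $c=1/\sqrt{2}$ and gives $d_{p,rel}(\Lambda)=2^{-n/2}\,d_{p,min}(\Lambda)$.

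Next I would compute $d_{p,min}(\Lambda)$. Writing $\Lambda=\frac{1}{\sqrt{2^{r-1}p}}\,\sigma_{\alpha}(\mathcal I)$ and applying the same scaling rule together with Proposition \ref{distanciaminima} to the unscaled lattice $\sigma_{\alpha}(\mathcal I)$, I obtain $d_{p,min}(\Lambda)=(2^{r-1}p)^{-n/2}\sqrt{N_{\mattK|\mattQ}(\alpha)}\,m$, where $m=\min_{0\neq y\in\mathcal I}|N_{\mattK|\mattQ}(y)|$. The factor $N_{\mattK|\mattQ}(\alpha)$ is evaluated through the compositum: since $\alpha_1\in\matK_1$ and $\alpha_2\in\matK_2$ with $[\matK:\matK_1]=n_2$ and $[\matK:\matK_2]=n_1$, one has $N_{\mattK|\mattQ}(\alpha_1)=N_{\mattK_1|\mattQ}(\alpha_1)^{n_2}=2^{n_2}$ and $N_{\mattK|\mattQ}(\alpha_2)=N_{\mattK_2|\mattQ}(\alpha_2)^{n_1}=p^{n_1}$, the individual norms $2$ and $p$ being exactly those already used in the determinant computation of Proposition \ref{rotacionado11}; hence $N_{\mattK|\mattQ}(\alpha)=2^{n_2}p^{n_1}$. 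Substituting these values and collecting powers of $2$ and $p$ reduces the whole statement to the single arithmetic claim $m=1$, that is, $\mathcal I$ contains an element of norm $\pm 1$.

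Establishing $m=1$ is the heart of the argument and the step I expect to be the main obstacle. Since $\mathcal I\subseteq\mathcal O_{\mattK}$, every nonzero $y\in\mathcal I$ has $|N_{\mattK|\mattQ}(y)|\geq 1$, so it suffices to exhibit one unit of $\mathcal O_{\mattK}$ lying in $\mathcal I$. The candidate I would use is $u=2+b_1=b_{n_2}^{2}$: using $2n_2=p-1$ one checks $b_{n_2}^{2}=\zeta_{p}^{p-1}+2+\zeta_{p}^{-(p-1)}=2+b_1$, and since $b_{n_2}$ is a Galois conjugate of $b_1$ it is a unit of $\mathcal O_{\mattK_2}$ (the minimal polynomial of $2\cos(2\pi/p)$ has constant term $\pm1$), whence $N_{\mattK_2|\mattQ}(u)=N_{\mattK_2|\mattQ}(b_{n_2})^{2}=1$ and therefore $N_{\mattK|\mattQ}(u)=1^{\,n_1}=1$. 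It then remains to verify $u\in\mathcal I$: rewriting $2=-2\sum_{j=1}^{n_2}e_0 b_j$ via $1=-\sum_{j}b_j$ gives $u=-e_0b_1-2e_0b_2-\cdots-2e_0b_{n_2}$, whose coordinate on $2e_0b_{n_2}$ is the integer $-1$ and whose remaining coordinates are integer multiples of $e_0b_1,\dots,e_0b_{n_2-1}$, so $u$ is an integer combination of the basis $\gamma$ of Proposition \ref{rotacionado11}. Hence $u\in\mathcal I$ with $|N_{\mattK|\mattQ}(u)|=1$, forcing $m=1$. Combining this with the two previous paragraphs yields $d_{p,rel}(\Lambda)=2^{(-nr+n_2)/2}\,p^{(-n+n_1)/2}$, as claimed.
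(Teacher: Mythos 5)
Your proposal is correct and follows essentially the same route as the paper's proof: reduce to Proposition \ref{distanciaminima}, compute $N_{\mattK|\mattQ}(\alpha)=2^{n_2}p^{n_1}$ through the compositum, establish $\min_{0\neq y\in\mathcal I}|N_{\mattK|\mattQ}(y)|=1$ by exhibiting a unit of $\mathcal O_{\mattK}$ inside $\mathcal I$, and rescale by $1/\sqrt{2}$ using the minimum norm $\sqrt{2}$ of $D_n$. The only divergence is your choice of witness: where you construct $u=2+b_1=b_{n_2}^{2}$ and verify its integer coordinates in $\gamma$, the paper simply uses the basis element $e_0b_1=b_1$ itself, whose norm satisfies $|N_{\mattK|\mattQ}(b_1)|=|N_{\mattK_2|\mattQ}(b_1)|^{n_1}=1$ by the result quoted from \cite{grasiele}.
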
 \begin{proof} From \cite{grasiele}, $|N_{\mattK_1|\mattQ}(b_1)| =1.$
Then, $|N_{\mattK|\mattQ}(b_1)|=
|(N_{\mattK_1|\mattQ}(b_1))^{n_2}N_{\mattK_2|\mattQ}(1)^{n_1}|$
\linebreak $=1.$ By Proposition \ref{distanciaminima},
${d_{p}(\sigma_{\alpha}(\mathcal I))}
=\sqrt{N_{\mattK_1|\mattQ}(\alpha)} min_{0\neq y \in \mathcal
I}|N_{\mattK_1|\mattQ}(y)| = \sqrt{2^{n_2}p^{n_1}},$ since
$min_{0\neq y \in \mathcal I}|N(y)| =1.$ The minimum Euclidean  norm
of $D_n$ is $\sqrt{2}$ what gives the expression for the relative
minimum product distance:

$d_{p,rel}(\Lambda) = \left(\frac{1}{\sqrt{2}}\right)^{n_1n_2}
\left(\frac{1}{\sqrt{2^{r-1}p}}\right)^{n_1n_2}
\sqrt{2^{n_2}p^{n_1}} = 2^{\frac{-n_1n_2r+n_2}{2}}
p^{\frac{-n_1n_2+n_1}{2}} .$
\end{proof}

In what follows we consider the compositum of the fields
$\matQ(\zeta_{p_1}+\zeta_{p_1}^{-1})$ and $\matQ(\zeta_{p_2} +
\zeta_{p_2}^{-1})$, $p_1,p_2$ distinct prime numbers, to construct
rotated $D_n$-lattices. In this construction we also obtain rotated
$D_n$-lattices via free $\matZ$-modules of rank $n$ which are not
ideals.

\begin{proposition}\label{rotacionado22} Let $\matK_1=\matQ(\zeta_{p_1}+\zeta_{p_1}^{-1})$,
$\matK_2=\matQ(\zeta_{p_2} + \zeta_{p_2}^{-1})$,  $p_1, p_2 \geq 5$
prime numbers, $p_1 \neq p_2$,
$e_i=\zeta_{p_1}^{i}+\zeta_{p_1}^{-i}$,
$b_i=\zeta_{p_2}^{i}+\zeta_{p_2}^{-i}$. If $\matK=\matK_1 \matK_2$
and $\mathcal I \subseteq \mathcal O_{\mattK}$ is the $\matZ$-module
with $\matZ$-basis $$\gamma_1=\{e_1b_1, e_1b_2, \cdots,
e_1b_{n_2-1}, e_1b_{n_2}, e_2b_1, \cdots, e_2b_{n_2}, \cdots,
e_{n_1}b_1,\cdots, 2e_{n_1}b_{n_2}\},$$  $\alpha_1=(2-e_1)$,
$\alpha_2=(2-b_1)$, $\alpha=(2-e_1)(2-b_1)$, then the lattice
$\frac{1}{\sqrt{p_1p_2}}\sigma_{\alpha}(\mathcal I) \subseteq
\matR^{n}$ is a rotated $D_n$-lattice.
\end{proposition}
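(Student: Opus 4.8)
The plan is to mirror the proof of Proposition \ref{rotacionado11} almost verbatim, since the two statements have identical structure; the only differences are that both factor fields are now of the prime type $\matQ(\zeta_p+\zeta_p^{-1})$ rather than one being of $2$-power type, and the ``doubled'' generator has been moved to the last basis vector $2e_{n_1}b_{n_2}$ instead of the first. First I would record the one-dimensional trace data coming from \cite{grasiele} (as in Proposition \ref{rotacionado}), namely
\begin{equation*}
Tr_{\mattK_1|\mattQ}(\alpha_1 e_ie_i)=\begin{cases} p_1, & i=n_1;\\ 2p_1, & i\neq n_1,\end{cases}
\qquad
Tr_{\mattK_2|\mattQ}(\alpha_2 b_jb_j)=\begin{cases} p_2, & j=n_2;\\ 2p_2, & j\neq n_2,\end{cases}
\end{equation*}
and then use the multiplicativity of the trace over the compositum, $Tr_{\mattK|\mattQ}(\alpha_1 e_i b_j \alpha_2 e_k b_l)=Tr_{\mattK_1|\mattQ}(\alpha_1 e_i e_k)\,Tr_{\mattK_2|\mattQ}(\alpha_2 b_j b_l)$, to obtain the diagonal entries of the Gram matrix $G=(Tr_{\mattK|\mattQ}(\alpha w_i w_j))$. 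The resulting diagonal values are $p_1 p_2$ exactly at the corner $(i,j)=(n_1,n_2)$, the value $4p_1p_2$ in the generic interior, and $2p_1p_2$ on the remaining edges; crucially the doubled basis vector $2e_{n_1}b_{n_2}$ contributes $4\cdot p_1 p_2=4p_1p_2$, so after scaling by $1/\sqrt{p_1p_2}$ every diagonal entry of the rescaled lattice lands in $\{2,4\}$, hence is even.

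With the even-diagonal fact in hand, the core of the argument is identical to Proposition \ref{rotacionado11}. From \cite{Oggier} the full lattice $\Lambda_1=\frac{1}{\sqrt{p_1p_2}}\sigma_{\alpha}(\mathcal O_{\mattK})$ is a rotated $\matZ^n$-lattice, and since $\mathcal I\subseteq \mathcal O_{\mattK}$ we have $\Lambda=\frac{1}{\sqrt{p_1p_2}}\sigma_{\alpha}(\mathcal I)\subseteq \Lambda_1$. Because each squared basis-vector norm is even, Remark \ref{even} places every generator of $\Lambda$ inside the rotated copy $R(D_n)\subseteq\Lambda_1$, whence $\Lambda\subseteq R(D_n)$. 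To upgrade this containment to equality I would compare determinants: by Proposition \ref{detb},
\begin{equation*}
\det(\sigma_{\alpha}(\mathcal I))=N(\mathcal I)^2\, N_{\mattK|\mattQ}(\alpha)\, d_{\mattK}
=2^2\,(N_{\mattK_1|\mattQ}(\alpha_1))^{n_2}(N_{\mattK_2|\mattQ}(\alpha_2))^{n_1}\, d_{\mattK_1}^{n_2} d_{\mattK_2}^{n_1},
\end{equation*}
using $N(\mathcal I)=[\mathcal O_{\mattK}:\mathcal I]=2$ from the doubled generator and the multiplicativity of norms and discriminants in the compositum of the two coprime-conductor fields. Feeding in $N_{\mattK_i|\mattQ}(2-e\text{ or }b)=p_i$, $d_{\mattK_i}=p_i^{(p_i-3)/2}$, and $n_i=(p_i-1)/2$, the product should collapse to $4\,(p_1 p_2)^{n_1 n_2}$, matching $\det D_n=4$ under the $1/\sqrt{p_1p_2}$ scaling in dimension $n=n_1 n_2$; since $\Lambda\subseteq R(D_n)$ and the two have equal determinant, $\Lambda=R(D_n)$ is a rotated $D_n$-lattice.

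The step I expect to require the most care is the determinant bookkeeping, and in particular justifying the discriminant and norm factorizations over the compositum. Unlike Proposition \ref{rotacionado11}, where one factor was the well-documented $2$-power cyclotomic subfield with $d_{\mattK_1}=2^{(r-1)2^{r-2}-1}$ cited from \cite{lopes}, here I must invoke $d_{\mattK}=d_{\mattK_1}^{n_2}d_{\mattK_2}^{n_1}$ and the analogous norm splitting, which are valid because $\matK_1$ and $\matK_2$ are arithmetically disjoint (their conductors $p_1,p_2$ are distinct primes, so $\gcd$ of the discriminants is $1$ and $\matK_1\cap\matK_2=\matQ$). I would therefore state the coprimality of conductors explicitly as the hypothesis that makes the tower formulas apply, confirm that $N(\mathcal I)=2$ follows exactly as the $|\mathcal O_{\mattK}/\mathcal I|=2$ computation in Proposition \ref{dist} (the single doubled generator raises the index to $2$), and then verify that the exponents of $p_1$ and $p_2$ tally so that the final value is precisely $4\,(p_1p_2)^{n_1n_2}$.
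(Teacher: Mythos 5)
Your proposal is correct and follows essentially the same route as the paper's proof: the product trace formulas give even scaled squared norms for all basis vectors, Remark \ref{even} together with the rotated $\matZ^n$-lattice from \cite{Oggier} yields $\Lambda\subseteq R(D_n)$, and Proposition \ref{detb} with $N(\mathcal I)=2$ gives $\det(\sigma_\alpha(\mathcal I))=4(p_1p_2)^{n_1n_2}$, forcing equality. In fact your bookkeeping is more careful than the paper's printed version, which contains two apparent typos that your computation implicitly corrects: the corner trace at $(i,j)=(n_1,n_2)$ is $p_1p_2$ for the undoubled generator (not $2p_1p_2$ as listed, which is why the doubling to $2e_{n_1}b_{n_2}$ is needed), and the discriminant exponents should be $\frac{p_i-3}{2}$ as you use, not $\frac{p_i-1}{3}$.
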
 \begin{proof} From \cite{Oggier}: \\ $Tr_{\mattK|\mattQ}(\alpha e_i b_j e_i b_j)=\left\{\begin{array}{l}
2 p_1 p_2,\mbox{ if } i=n_1 \mbox{ and } j=n_2; \\ 2 p_1 p_2,\mbox{
if either } i \neq n_1 \mbox{ and } j = n_2 \mbox{ or } i = n_1
\mbox{ and } j \neq n_2 \\ 4 p_1 p_2,\mbox{ if } i \neq n_1 \mbox{
and } j \neq n_2
\end{array}\right.$ \\ Using the same ideas of Proposition \ref{rotacionado11}, we show that
$\frac{1}{\sqrt{p_1p_2}}\sigma_{\alpha}(\mathcal I) \subseteq
R(D_n)$. By Proposition \ref{detb} , $\det(\sigma_{\alpha}(I))  =
2^{2} p_1^{n_2}p_2^{n_1}
p_1^{\frac{p_1-1}{3}n_2}p_2^{\frac{p_2-1}{3}n_1} = 4
 (p_1p_2)^{n_1n_2}.$ Then, $\det(\Lambda)=4 = \det(R(D_n))$ and $\Lambda$ is a rotated $D_n$-lattice.
 \end{proof}

\begin{proposition} The $\matZ$-module  $I \subseteq \mathcal O_{\mattK}$ given in Proposition \ref{rotacionado22} is not an ideal of $\mathcal O_{\mattK}$. \end{proposition}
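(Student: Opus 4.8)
The plan is to mimic the argument used in Proposition \ref{dist}, which shows the analogous $\matZ$-module of Proposition \ref{rotacionado11} is not an ideal. The overall strategy is a proof by contradiction: assume $I$ is an ideal of $\mathcal O_{\mattK}$ and derive a contradiction by exhibiting an element of $\mathcal O_{\mattK}$ that must lie in $I$ but provably does not. First I would record that $\left|\frac{\mathcal O_{\mattK}}{I}\right|=2$, which follows at once from Proposition \ref{detb}: since $\det(\sigma_\alpha(I))=N(I)^2\,N_{\mattK|\mattQ}(\alpha)\,d_{\mattK}$ equals $4\,(p_1p_2)^{n_1n_2}$ while $N(\mathcal O_{\mattK})=1$ forces $N(I)=2$. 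Consequently $I$ has index $2$ in $\mathcal O_{\mattK}$, so it is a proper submodule and $1=e_{n_1}b_{n_2}\notin I$ (otherwise $I$ would contain the identity and hence all of $\mathcal O_{\mattK}$).

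Next I would isolate a concrete basis element and a concrete product that lands outside $I$. The candidate mirroring the earlier proof is the distinguished generator $2e_{n_1}b_{n_2}$, whose factor of $2$ signals that $e_{n_1}b_{n_2}$ itself is the ``missing'' element. Using the recurrence $b_i b_j=b_{i+j}+b_{|i-j|}$ (with the convention $b_0=2=e_0$ in the appropriate ring) together with the reflection relations coming from $\zeta_{p_2}^{p_2}=1$, I would select indices so that a product of two basis elements of $I$ expands as a $\matZ$-linear combination that isolates $e_{n_1}b_{n_2}$ together with other genuine basis elements. For instance, an expression of the form $e_{n_1}b_{n_2-1}\cdot e_0 b_1$ (or the $e$-index analogue) should, after applying the product rules, produce $e_{n_1}b_{n_2}$ plus terms already in $I$; if $I$ were an ideal and thus closed under multiplication by $\mathcal O_{\mattK}$, this would force $e_{n_1}b_{n_2}\in I$, contradicting the previous paragraph.

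The main obstacle I anticipate is purely bookkeeping: one must verify that \emph{all} the auxiliary terms produced when expanding the chosen product via the $b_ib_j$ and $e_ie_j$ identities genuinely belong to $I$, i.e. that they are honest $\matZ$-linear combinations of the listed basis vectors $\gamma_1$ and do not secretly reintroduce the forbidden $e_{n_1}b_{n_2}$ (or $1$) with the wrong coefficient. Because the basis $\gamma_1$ uses the plain generators $e_ib_j$ for all admissible $(i,j)$ except that the corner $e_{n_1}b_{n_2}$ is replaced by $2e_{n_1}b_{n_2}$, the parity of the coefficient on the corner term is exactly what decides membership; so the whole argument reduces to a careful parity check on that single coefficient after expansion. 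I would therefore organize the computation to track only the coefficient of $e_{n_1}b_{n_2}$ modulo $2$, discarding the rest, which makes the contradiction transparent while keeping the routine algebra to a minimum.
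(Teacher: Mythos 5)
Your proposal is correct and follows the same strategy as the paper's proof of Proposition \ref{dist}: establish $\left|\mathcal O_{\mattK}/I\right|=2$, observe that membership in $I$ is exactly the condition that the coefficient of the corner vector $e_{n_1}b_{n_2}$ be even when an element is expanded in the integral basis $\{e_ib_j\}_{1\le i\le n_1,\,1\le j\le n_2}$ (this is the lemma you implicitly need; it follows from $\sum_{i=1}^{n_1}e_i=\sum_{j=1}^{n_2}b_j=-1$ and the coprimality of $d_{\mattK_1}$ and $d_{\mattK_2}$, and it also gives $[\mathcal O_{\mattK}:I]=2$ directly, more cleanly than your slightly circular appeal to Proposition \ref{detb}, whose value $4(p_1p_2)^{n_1n_2}$ was itself computed using $N(I)=2$), and then exhibit a product of an element of $I$ by an element of $\mathcal O_{\mattK}$ with odd corner coefficient. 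One slip: ``$1=e_{n_1}b_{n_2}$'' is false as an equation; the correct justification is that $e_{n_1}b_{n_2}\in I$ would force $I=\mathcal O_{\mattK}$, contradicting index $2$.

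What deserves emphasis is that your witness is in fact \emph{better} than the paper's. Read literally, the paper takes the product $e_{n_1}b_{n_2-1}\cdot e_{n_1}b_1$, copying the pattern of Proposition \ref{dist}; but the analogy breaks because there $e_0^2=e_0$, while here $e_{n_1}^2=e_{2n_1}+2=e_1+2$ (since $2n_1=p_1-1\equiv -1 \bmod p_1$). Thus $e_{n_1}b_{n_2-1}\cdot e_{n_1}b_1=(e_1+2)(b_{n_2}+b_{n_2-2})$, and since $2b_{n_2}=-2\sum_{i=1}^{n_1}e_ib_{n_2}$, the coefficient of $e_{n_1}b_{n_2}$ is $-2$: the paper's element actually \emph{lies in} $I$, so its one-line verification fails as stated. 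Your element $e_{n_1}b_{n_2-1}\cdot e_0b_1=e_{n_1}b_{n_2}+e_{n_1}b_{n_2-2}$ has corner coefficient $1$ when $n_2\ge 3$, and $1-2=-1$ when $n_2=2$ (where $e_{n_1}b_0=2e_{n_1}=-2\sum_{j}e_{n_1}b_j$); in all cases the parity is odd, so the product escapes $I$ and the contradiction goes through. Your plan of tracking only the corner coefficient modulo $2$ is exactly the right bookkeeping and is robust where the paper's specific choice is not.
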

\begin{proof} Using the same argument of Proposition \ref{dist} we
show that \linebreak $e_{n_1} b_{n_2-1} e_{n_1} b_{1}  \not\in I$.
\end{proof}

\begin{proposition} If $\Lambda =\frac{1}{\sqrt{2^{r-1}p}}\sigma_{\alpha}(I) \subseteq
\matR^{n}$  as in the Proposition \ref{rotacionado22}, then the
relative minimum product distance is ${d_{p,rel}}(\Lambda)=
2^{\frac{-n_1n_2}{2}} p_1^{\frac{-n + n_2}{2}}p_2^{\frac{-n +
n_1}{2}}.$
\end{proposition}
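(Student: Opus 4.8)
The plan is to compute the relative minimum product distance directly from Proposition \ref{distanciaminima}, following the same template as the previous computation for the compositum $\matK_1\matK_2$ of $\matQ(\zeta_{2^r}+\zeta_{2^r}^{-1})$ and $\matQ(\zeta_p+\zeta_p^{-1})$. First I would use the multiplicativity of the norm under the compositum to split $N_{\mattK|\mattQ}(\alpha)$ as $(N_{\mattK_1|\mattQ}(\alpha_1))^{n_2}(N_{\mattK_2|\mattQ}(\alpha_2))^{n_1}$. Since $\alpha_1=2-e_1$ with $\matK_1=\matQ(\zeta_{p_1}+\zeta_{p_1}^{-1})$ and $\alpha_2=2-b_1$ with $\matK_2=\matQ(\zeta_{p_2}+\zeta_{p_2}^{-1})$, I would invoke the norm value $N_{\mattK_i|\mattQ}(2-e_1)=p_i$ already used in Proposition \ref{rotacionado} (and implicit in the determinant computation of Proposition \ref{rotacionado22}, where $N_{\mattK_1|\mattQ}(\alpha_1)=p_1$ and $N_{\mattK_2|\mattQ}(\alpha_2)=p_2$). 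This gives $N_{\mattK|\mattQ}(\alpha)=p_1^{n_2}p_2^{n_1}$, so $\sqrt{N_{\mattK|\mattQ}(\alpha)}=p_1^{n_2/2}p_2^{n_1/2}$.

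Next I would verify that $\min_{0\neq y\in\mathcal I}|N_{\mattK|\mattQ}(y)|=1$, exactly as in the previous proposition. The generator $e_1b_1$ satisfies $|N_{\mattK_1|\mattQ}(e_1)|=|N_{\mattK_2|\mattQ}(b_1)|=1$ (the norm of $\zeta_{p}^{i}+\zeta_{p}^{-i}$ is a unit for these fields), so $|N_{\mattK|\mattQ}(e_1b_1)|=1$, and since norms of nonzero algebraic integers are nonzero integers the minimum is attained at $1$. Hence by Proposition \ref{distanciaminima}, $d_{p}(\sigma_{\alpha}(\mathcal I))=p_1^{n_2/2}p_2^{n_1/2}$.

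Finally I would assemble the scaling factors. The lattice is $\Lambda=\frac{1}{\sqrt{p_1p_2}}\sigma_{\alpha}(\mathcal I)$, so the product distance scales by $(1/\sqrt{p_1p_2})^{n}$ where $n=n_1n_2$. To pass to the relative minimum product distance (Definition \ref{relative}), I divide by the minimum norm vector raised to the $n$-th power; since $\Lambda$ is a rotated $D_n$-lattice its minimum Euclidean norm is $\sqrt{2}$, contributing a factor $(1/\sqrt{2})^{n_1n_2}$. Multiplying these together yields
$$d_{p,rel}(\Lambda)=\left(\frac{1}{\sqrt{2}}\right)^{n_1n_2}\left(\frac{1}{\sqrt{p_1p_2}}\right)^{n_1n_2}p_1^{n_2/2}p_2^{n_1/2}=2^{\frac{-n_1n_2}{2}}p_1^{\frac{-n+n_2}{2}}p_2^{\frac{-n+n_1}{2}},$$
using $n=n_1n_2$ to combine exponents. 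The only point requiring care — and the main obstacle if any — is bookkeeping the exponents of $p_1$ and $p_2$ correctly: the scaling contributes $-n_1n_2/2$ to each while the norm contributes $+n_2/2$ to $p_1$ and $+n_1/2$ to $p_2$, so the asymmetry in the final formula is exactly what one expects and matches the stated result.
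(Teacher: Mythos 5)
Your proposal is correct and follows essentially the same route as the paper's own proof: apply Proposition \ref{distanciaminima} with $\sqrt{N_{\mattK|\mattQ}(\alpha)}=\sqrt{p_1^{n_2}p_2^{n_1}}$ and $\min_{0\neq y\in\mathcal I}|N_{\mattK|\mattQ}(y)|=1$ (witnessed by the basis element $e_1b_1$, whose norm is a unit), then rescale by $\left(\frac{1}{\sqrt{2}}\right)^{n_1n_2}\left(\frac{1}{\sqrt{p_1p_2}}\right)^{n_1n_2}$ using the minimum norm $\sqrt{2}$ of $D_n$. If anything, your bookkeeping is more careful than the paper's, whose proof contains a copy-paste slip writing the intermediate value as $\sqrt{2^{n_2}p^{n_1}}$ (carried over from the $\matQ(\zeta_{2^r}+\zeta_{2^r}^{-1})\matQ(\zeta_p+\zeta_p^{-1})$ case) before the correct $\sqrt{p_1^{n_2}p_2^{n_1}}$ appears in the final display.
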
 \begin{proof} From \cite{grasiele}, $|N_{\mattK_1|\mattQ}(b_1)| =1.$
So, $|N_{\mattK|\mattQ}(b_1)| =1.$ By Proposition
\ref{distanciaminima},  ${ d_{p}(\sigma_{\alpha}(I))}$
$=\sqrt{N_{\mattK_1|\mattQ}(\alpha)} min_{0\neq y \in
I}|N_{\mattK_1|\mattQ}(y)| = \sqrt{2^{n_2}p^{n_1}},$ since
$min_{0\neq y \in I}|N(y)|$ \linebreak $=1.$ The minimum Euclidean
norm of $D_n$ is $\sqrt{2}$ what gives the expression for the
relative minimum product distance:

$$d_{p,rel}(\Lambda) =
\left(\frac{1}{\sqrt{2}}\right)^{n_1n_2}
\left(\frac{1}{\sqrt{p_1p_2}}\right)^{n_1n_2}
\sqrt{p_1^{n_2}p_2^{n_1}} = 2^{\frac{-n_1n_2}{2}}
p_1^{\frac{-n_1n_2+n_2}{2}}p_2^{\frac{-n_1n_2+n_1}{2}}.$$

\end{proof}

For $\matK_1=\matQ(\zeta_p + \zeta_p^{-1})$, $p$ prime,
$\matK_2=\matQ(\zeta_p +
\zeta_p^{-1})\matQ(\zeta_{2^r}+\zeta_{2^r}^{-1})$, $p$ prime, and
$\matK_3=\matQ(\zeta_{p_1} +
\zeta_{p_1}^{-1})\matQ(\zeta_{p_2}+\zeta_{p_2}^{-1})$, $p_1, p_2$
primes, $p_1 \neq p_2$, if it was possible to construct these
rotated $D_n$-lattices via principal ideals of $\mathcal
O_{\mattK_i}$ we would have twice the relative minimum product
distances than the ones obtained in our constructions. This fact
motivated the discussion on the existence of such constructions via
ideals of $\mathcal O_{\mattK_i}$, $i=1,2,3$. It will be shown in
the next section that it is impossible to construct rotated
$D_n$-lattices via fractional ideals of $\mathcal O_{\mattK_i}$,
$i=1,2,3$.

\section{On the existence of rotated $D_n$-lattices constructed via Galois extensions}

From (\ref{det}), a necessary condition to construct a rotated
$D_n$-lattice, scaled by $\sqrt{c}$ with $c \in \matZ$, via
fractional ideals of $\mathcal O_{\mattK}$, is the existence of a
fractional ideal $\mathcal I \subseteq \mathcal O_{\mattK}$ and a
totally positive element $\alpha$ such that $ 4 c^{n} =
N_{\mattK|\mattQ}(\alpha) N(\mathcal I)^{2}d_{\mattK}.$

\begin{proposition}\cite{pierre} If $\mathcal P \subseteq \matZ$ is a nonzero prime ideal and $\matK|\matQ$ is a Galois extension, then
$\mathcal P\mathcal O_{\mattK}=\prod_{i=1}^{g}\mathcal Q_i^e,$ where
$\mathcal Q_{i\,'s}$ are distinct nonzero prime ideals of $\mathcal
O_{\mattK}.$ Moreover, $f(\mathcal Q_i|\mathcal P)=[\mathcal
O_{\mattL}/\mathcal Q_{i}:\mathcal O_{\mattK}/\mathcal P\mathcal
O_{\mattK}]= f$  and $n = e f g.$
\end{proposition}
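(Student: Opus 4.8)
The plan is to split the statement into its non-Galois core and its Galois refinement. The non-Galois core is the existence and uniqueness of a factorization $\mathcal P \mathcal O_{\mattK}=\prod_{i=1}^{g}\mathcal Q_i^{e_i}$ together with the fundamental identity $\sum_{i=1}^{g} e_i f_i = n$, where $f_i = [\mathcal O_{\mattK}/\mathcal Q_i : \matZ/\mathcal P]$; this part holds for any number field and uses nothing about $Gal(\matK|\matQ)$. The Galois refinement is that $Gal(\matK|\matQ)$ acts transitively on the set $\{\mathcal Q_1,\ldots,\mathcal Q_g\}$ of primes of $\mathcal O_{\mattK}$ lying over $\mathcal P$, which forces all $e_i$ to coincide with a common value $e$ and all $f_i$ with a common value $f$; the identity then collapses to $n=efg$.

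For the core I would use that $\mathcal O_{\mattK}$ is a Dedekind domain, so $\mathcal P \mathcal O_{\mattK}$ admits a unique factorization into prime ideals $\prod_i \mathcal Q_i^{e_i}$, and the $\mathcal Q_i$ are precisely the primes of $\mathcal O_{\mattK}$ containing $\mathcal P$. The identity $\sum_i e_i f_i = n$ I would obtain by computing the $\matZ/\mathcal P$-dimension of the quotient $\mathcal O_{\mattK}/\mathcal P\mathcal O_{\mattK}$ in two ways: on one side $\mathcal O_{\mattK}$ is free of rank $n$ over $\matZ$, so the quotient has dimension $n$; on the other side the Chinese Remainder Theorem gives $\mathcal O_{\mattK}/\mathcal P\mathcal O_{\mattK}\cong\prod_i \mathcal O_{\mattK}/\mathcal Q_i^{e_i}$, and filtering each factor by the powers $\mathcal Q_i^{j}/\mathcal Q_i^{j+1}$ shows $\dim_{\mattZ/\mathcal P}\mathcal O_{\mattK}/\mathcal Q_i^{e_i}=e_i f_i$.

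The Galois refinement rests on transitivity, which is the step I expect to be the main obstacle. Given two primes $\mathcal Q,\mathcal Q'$ over $\mathcal P$, I would argue by contradiction: if $\mathcal Q'\neq\sigma(\mathcal Q)$ for every $\sigma\in Gal(\matK|\matQ)$, then by prime avoidance (the conjugates $\sigma(\mathcal Q)$ being finitely many distinct maximal ideals) I can choose $x\in\mathcal Q'$ with $x\notin\sigma(\mathcal Q)$ for all $\sigma$. Since the extension is Galois, $N_{\mattK|\mattQ}(x)=\prod_{\sigma}\sigma(x)$ lies in $\mathcal Q'\cap\matZ=\mathcal P\subseteq\mathcal Q$; as $\mathcal Q$ is prime, some factor satisfies $\sigma(x)\in\mathcal Q$, i.e. $x\in\sigma^{-1}(\mathcal Q)$, contradicting the choice of $x$. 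Hence every prime over $\mathcal P$ is a conjugate of $\mathcal Q$, proving transitivity.

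Finally I would record that each $\sigma\in Gal(\matK|\matQ)$ induces a $\matZ/\mathcal P$-algebra isomorphism $\mathcal O_{\mattK}/\mathcal Q_i\to\mathcal O_{\mattK}/\sigma(\mathcal Q_i)$, so conjugate primes share the same residue degree, and applying $\sigma$ to the factorization of $\mathcal P\mathcal O_{\mattK}$ shows conjugate primes share the same ramification index. Combining these two invariances with transitivity yields $e_i=e$ and $f_i=f$ for all $i$, and substituting into $\sum_i e_i f_i=n$ gives $efg=n$, which completes the argument.
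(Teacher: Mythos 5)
The paper gives no proof of this proposition at all---it is quoted directly from the cited reference (Samuel, \cite{pierre})---and your argument is precisely the classical proof found there and in Marcus \cite{marcus}: the Dedekind factorization with the count $\sum_i e_i f_i = n$ via the Chinese Remainder Theorem and the filtration $\mathcal Q_i^{j}/\mathcal Q_i^{j+1}$, transitivity of the Galois action obtained from the norm $N_{\mattK|\mattQ}(x)=\prod_{\sigma}\sigma(x)$ together with prime avoidance, and the conjugation-invariance of $e$ and $f$ forcing the collapse to $n=efg$. Your proposal is correct and complete, and coincides with the standard proof the paper is implicitly relying on.
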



\begin{proposition}\label{grasinha} Let  $\matK|\matQ$ be a Galois
extension, $[\matK:\matQ]=n$, $d_{\mattK}=2^{z} d$,  $z \in \matN$
and $d$ odd. If $2\mathcal O_{\mattK}=\prod_{i=1}^{g}{\mathcal
Q_i}^{e}$ and $f=f(\mathcal Q_i|2\mathcal O_{\mattK})$ does not
divide $2-z$, then it is impossible to construct rotated
$D_n$-lattices via fractional ideals of $\mathcal O_{\mattK}$.
\end{proposition}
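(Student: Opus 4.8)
The plan is to pass to $2$-adic valuations in the necessary condition
$$4c^{n}=N_{\mattK|\mattQ}(\alpha)\,N(\mathcal I)^{2}\,d_{\mattK}$$
coming from (\ref{det}), and to show that any solution forces $f\mid(2-z)$; the contrapositive is exactly the assertion. Writing $v$ for the $2$-adic valuation on $\matQ$, I would first record the two easy pieces. Since $d_{\mattK}=2^{z}d$ with $d$ odd, we have $v(d_{\mattK})=z$; and since the left-hand side is $4c^{n}$, we have $v(4c^{n})=2+n\,v(c)$ (note $c\neq 0$, as the right-hand side is a product of positive quantities for a totally real field).

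The heart of the argument is to show that $v(N(\mathcal I))$ and $v(N_{\mattK|\mattQ}(\alpha))$ are both multiples of $f$. Because the ideal norm is multiplicative on fractional ideals, writing $\mathcal I=\prod_{\mathcal P}\mathcal P^{a_{\mathcal P}}$ over the prime ideals $\mathcal P$ of $\mathcal O_{\mattK}$ gives $v(N(\mathcal I))=\sum_{\mathcal P}a_{\mathcal P}\,v(N(\mathcal P))$. A prime $\mathcal P$ lying over an odd rational prime has odd norm and contributes $0$; a prime lying over $2$ is one of the $\mathcal Q_i$, and by the Galois hypothesis of the preceding proposition all of them share the common inertia degree $f$, so $N(\mathcal Q_i)=2^{f}$ and each such term contributes a multiple of $f$. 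Hence $v(N(\mathcal I))=f\sum_i a_i$. Since $\alpha$ is totally positive, $N_{\mattK|\mattQ}(\alpha)=N((\alpha))>0$ is the norm of the principal ideal $(\alpha)$, and the identical computation yields $v(N_{\mattK|\mattQ}(\alpha))=f\sum_i v_{\mathcal Q_i}(\alpha)$, again a multiple of $f$.

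Equating $2$-adic valuations in the necessary condition then gives
$$2+n\,v(c)=f\,M+z,$$
where $f\,M=2\,v(N(\mathcal I))+v(N_{\mattK|\mattQ}(\alpha))$ for some integer $M$. Reducing modulo $f$ and using that $n=efg$ is divisible by $f$ (so $n\,v(c)\equiv 0 \pmod f$), I obtain $2-z\equiv 0 \pmod f$, that is $f\mid(2-z)$. Consequently, whenever $f\nmid(2-z)$ there is no fractional ideal $\mathcal I$ and totally positive $\alpha$ satisfying the necessary condition for any scaling $c$, so no rotated $D_n$-lattice can be constructed via fractional ideals of $\mathcal O_{\mattK}$.

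The only step that genuinely needs care is the claim that the $2$-part of an ideal norm is a multiple of $f$: it rests on $N(\mathcal Q_i)=2^{f(\mathcal Q_i|2)}$ together with the fact, guaranteed by the previous proposition for Galois extensions, that every prime above $2$ has the same inertia degree $f$. Everything else is bookkeeping with the additivity of $v$ and the relation $n=efg$.
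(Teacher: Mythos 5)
Your proof is correct and follows essentially the same route as the paper: both arguments equate $2$-adic valuations in the necessary condition $4c^{n}=N_{\mattK|\mattQ}(\alpha)\,N(\mathcal I)^{2}\,d_{\mattK}$, use that every prime of $\mathcal O_{\mattK}$ above $2$ has the common inertia degree $f$ (so all $2$-parts of norms are powers of $2^{f}$) together with $f\mid n=efg$, and conclude $f\mid(2-z)$, whose contrapositive is the claim. The only cosmetic difference is that you factor the fractional ideal directly into prime ideals with integer exponents, whereas the paper clears denominators via Lemma~\ref{grasi}, writing $N(\mathcal I)=N(t\mathcal I)/t^{n}$ with $t=2^{k}l$ --- an equivalent piece of bookkeeping.
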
 \begin{proof} Any ideal ${\mathcal B}$
of $\mathcal O_{\mattK}$ with even norm satisfies $N(\mathcal B) =
\left(2^{f}\right)^{a} b$ where $a \geq 1$ and $b$ is odd
\cite{pierre}. Let $\mathcal I$ be a fractional ideal of $\matK$. By
Lemma \ref{grasi}, there is $t \in \matZ$ such that $t \mathcal I =
\mathcal A$ is an integer ideal of $\mathcal O_{\mattK}$.  Then,
$N(\mathcal I) = N(\mathcal A)/t^{n}$. $N(\mathcal A)$ can be
written as $N(\mathcal A) =\left(2^{f}\right)^{a_1}b_1$ and so
$N(\mathcal I) = \frac{(2^{f})^{a_1} b_1}{t^{n}}$, with $a_1 \in
\matN$ and $b_1$ odd. Note that $N(\alpha) = N(\alpha \mathcal
O_{\mattK}) = \left(2^{f}\right)^{a_2} b_2$ with $a_2 \in \matN$,
$b_2$ odd. Let $t = 2^{k}l$, $k \in \matN$ and $l$ odd. Since $n= e
f g$, for all $c \in \matZ$ we have
\begin{equation}\begin{split} \label{fund} N(\mathcal I)^{2}
N(\alpha)d_{\mattK} & = \frac{(2^{f})^{2 a_1} {b_1}^{2}}{t^{2 n}}
(2^{f})^{a_2}b_2 2^{z} d =2^{2fa_1 - 2kn + fa_2 + z}
(b_1^{2}l^{-2n}b_2 d) \neq 4 c^{n}.
\end{split}\end{equation} Indeed, if $c
= 2^{a} b$ with $a \geq 0$ and $b$ odd, then $4 c^{n} = (2^{n})^{a}
2^{2} b^{n}$ and the powers of $2$ must be equal in (\ref{fund}),
therefore $a e f g +2 = an + 2 = f(2a_1+ a_2 - k e g )+ z$ and this
implies that $2 - z = f(2 a_1 + a_2 - a e g - k e g)$. By hypothesis
$f$ does not divide $2 -z$ and then it is impossible to find a
fractional ideal $\mathcal I$ and an element $\alpha \in \mathcal
O_{\mattK}$ satisfying the necessary conditions.   \end{proof}

\begin{remark} Let
$\matK=\matQ(\zeta_{2^r}+\zeta_{2^r}^{-1})$. In \cite{grasiele}, we
have presented a construction of  rotated $D_n$-lattices via ideals
of $\mathcal O_{\mattK}$. Note that $f | (2 - z)$. Indeed, we have
$2\mathcal O_{\mattK}=\mathcal P^{2^{r-2}}$ where  $\mathcal
P=(2-(\zeta_{2^r}+\zeta_{2^r}^{-1}))\mathcal O_{\mattK}$. Hence,
$f=1$ and $f$ divides $2-z$. A natural question to be answered is if
the condition $f|(2-z)$ is sufficient to allow the construction of
rotated $D_n$-lattices via fractional ideals, assuring a kind of
Proposition \ref{grasinha} reciprocal.
\end{remark}

We will show in Proposition \ref{gra1} that for any Galois extension
$\matK|\matQ$ with discriminant $d_{\mattK}$ odd it is impossible to
construct such rotated $D_n$-lattices via fractional ideals in
$\matK$. In particular, this assertion includes the fractional
ideals of $\matQ(\zeta_p+\zeta_p^{-1})$ and
$\matQ(\zeta_{p_1}+\zeta_{p_1}^{-1})\matQ(\zeta_{p_2}+\zeta_{p_2}^{-1})$,
$p, p_1, p_2$ prime numbers, $p\geq 7$, $p_1,p_2 \geq 5$, $p_1\neq
p_2$.

\begin{lemma} Let $\matK|\matQ$ be a Galois extension with odd discriminant.
Therefore,  $2 \mathcal O_{\mattK} = \prod_{i=1}^{s}\mathcal Q_i$
with $\mathcal Q_i \neq \mathcal Q_j$ prime ideals of $\mathcal
O_{\mattK}$.
\end{lemma}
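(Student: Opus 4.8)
The plan is to reduce the statement to the classical criterion of Dedekind linking ramification to the discriminant: a rational prime $p$ ramifies in $\matK$ --- meaning that at least one exponent exceeds $1$ in the prime factorization of $p\mathcal O_{\mattK}$ --- if and only if $p \mid d_{\mattK}$. Since by hypothesis $d_{\mattK}$ is odd, we have $2 \nmid d_{\mattK}$, and the criterion immediately forces $2$ to be unramified in $\matK$. So the whole lemma comes down to translating ``unramified'' into the asserted squarefree factorization.

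First I would invoke the decomposition result for Galois extensions recalled at the beginning of this section, applied to the nonzero prime ideal $2\matZ$: because $\matK|\matQ$ is Galois, we may write $2\mathcal O_{\mattK}=\prod_{i=1}^{g}\mathcal Q_i^{e}$ with the $\mathcal Q_i$ pairwise distinct prime ideals of $\mathcal O_{\mattK}$, all sharing a common ramification index $e$ and a common residue degree $f$, subject to $n=efg$. The key structural point is that in the Galois setting a \emph{single} integer $e$ governs the entire factorization, so proving the lemma is equivalent to establishing $e=1$.

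Next I would combine the two observations. By the Dedekind criterion, $2\mid d_{\mattK}$ holds precisely when $e>1$; contrapositively, since $2\nmid d_{\mattK}$ we must have $e=1$. Substituting $e=1$ into the factorization yields $2\mathcal O_{\mattK}=\prod_{i=1}^{g}\mathcal Q_i$ with the $\mathcal Q_i$ distinct, which is exactly the claimed statement upon setting $s=g$.

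I do not anticipate a genuine obstacle: the result is an essentially immediate consequence of the discriminant--ramification theorem together with the uniform splitting behaviour in Galois extensions recorded above. The only points requiring a little care are to state the ramification criterion in the correct direction --- it is the oddness of the discriminant that \emph{forces} unramifiedness of $2$, not merely the converse --- and to exploit the Galois hypothesis so that the common value $e=1$ simultaneously rules out repeated factors for \emph{every} prime lying over $2$, thereby producing a genuinely squarefree factorization rather than the unramifiedness of a single prime.
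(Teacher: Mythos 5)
Your proof is correct and follows essentially the same route as the paper: both invoke the uniform factorization $2\mathcal O_{\mattK}=\prod_{i=1}^{g}\mathcal Q_i^{e}$ available for Galois extensions and then apply the discriminant--ramification criterion ($e>1$ if and only if $2\mid d_{\mattK}$) to conclude $e=1$. Your write-up is, if anything, slightly more explicit than the paper's about why the single common exponent $e$ in the Galois setting makes unramifiedness equivalent to the squarefree factorization.
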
 \begin{proof} Let $2 \mathcal O_{\mattK} =
\prod_{i=1}^{s}\mathcal Q_i^{e}$. From \cite{pierre}, $e
> 1$ if, and only if, $2\mathcal O_{\mattK}$ ramifies in $\mathcal
O_{\mattK}$. But this happens if, and only if, $2$ divides
$d_{\mattK}$, what does not happen. Therefore, $e=1$. \end{proof}

In the next lemma we use the Kummer's theorem to factorize ideals as
a product of prime ideals.

\begin{lemma} \label{lemmaa} If $\matK=\matQ(\theta)$ is a Galois extension such that $[\matK:\matQ] \not\in \{1,2,4\}$
and $2\mathcal O_{\mattK} = \prod_{i=1}^{s}\mathcal Q_i$, $\mathcal
Q_i \neq \mathcal Q_j$, then $f \neq 1$ and $f \neq 2$.  \end{lemma}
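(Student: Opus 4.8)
The plan is to transport the whole question from ideals to polynomials via Kummer's theorem and then exploit how few irreducible polynomials of small degree exist over $\matF_2$. Write $m(x)\in\matZ[x]$ for the minimal polynomial of $\theta$, so $\deg m=n=[\matK:\matQ]$, and let $\bar m(x)\in\matF_2[x]$ be its reduction modulo $2$. Arranging that $\mathcal O_{\mattK}=\matZ[\theta]$ at the prime $2$ (i.e. that $2$ does not divide the index $[\mathcal O_{\mattK}:\matZ[\theta]]$), Kummer's theorem says that the prime factorization of $2\mathcal O_{\mattK}$ is read off from the factorization of $\bar m$ into monic irreducibles: each prime $\mathcal Q_i$ above $2$ corresponds to an irreducible factor, the residue degree $f(\mathcal Q_i|2)$ equals the degree of that factor, and the ramification index equals its multiplicity.

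Next I would use the two structural hypotheses. Since $2\mathcal O_{\mattK}=\prod_{i=1}^{s}\mathcal Q_i$ with the $\mathcal Q_i$ pairwise distinct, the ramification index is $1$, so $\bar m$ is squarefree, that is, a product of $s$ \emph{distinct} monic irreducibles over $\matF_2$. Because $\matK|\matQ$ is Galois, all residue degrees coincide with a common value $f$, so every irreducible factor of $\bar m$ has degree exactly $f$ and $n=fs$. The key arithmetic input is then that over $\matF_2$ the only monic linear polynomials are $x$ and $x+1$, and the only monic irreducible quadratic is $x^{2}+x+1$.

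With this in hand the two cases close immediately. If $f=1$, then $\bar m$ is a product of distinct monic linear factors, of which there are only two, so $n=\deg\bar m\le 2$; if $f=2$, then $\bar m$ is a product of distinct monic irreducible quadratics, of which there is only one, so again $n\le 2$. Since $[\matK:\matQ]\notin\{1,2,4\}$ forces $n\ge 3$, both possibilities are excluded, giving $f\neq 1$ and $f\neq 2$, which is the assertion.

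The step I expect to be the real obstacle is the applicability of Kummer's theorem at $2$, namely securing a generator $\theta$ with $2\nmid[\mathcal O_{\mattK}:\matZ[\theta]]$; equivalently, the $\matF_2$-algebra $\mathcal O_{\mattK}/2\mathcal O_{\mattK}\cong\matF_{2^{f}}^{\,s}$ must be generated by the single element $\bar\theta$. This is possible exactly when $s$ does not exceed the number of monic irreducibles of degree $f$ over $\matF_2$ (two for $f=1$, one for $f=2$), which is the very same bound that produces the contradiction above, so the hypothesis enabling Kummer and the counting are intertwined. The excluded degrees $\{1,2,4\}$ are the small cases (for example a biquadratic field at $n=4$, where $\matF_4\times\matF_4$ fails to be monogenic) in which $f\in\{1,2\}$ can genuinely arise. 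For the fields to which the lemma is applied, namely $\matQ(\zeta_p+\zeta_p^{-1})$ and the compositums considered above, one has $\mathcal O_{\mattK}=\matZ[\zeta_N+\zeta_N^{-1}]$, so Kummer's theorem is available and the counting argument runs without obstruction.
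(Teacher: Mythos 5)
Your route is the paper's own route: reduce the minimal polynomial modulo $2$, invoke Kummer's theorem, and count monic irreducibles of degree $1$ and $2$ over $\matF_2$ (you even silently repair the paper's slip of listing $\overline{x^2}$ among the irreducible quadratics; with the correct count, $f=2$ and Kummer force $n\le 2$, and the exclusion of $n=4$ is needed only for the non-monogenic biquadratic situation you describe). But the step you yourself flag as the obstacle is a genuine gap, not a technicality, and it is shared by the paper's proof: Kummer's theorem requires a generator $\theta$ with $2\nmid[\mathcal O_{\mattK}:\matZ[\theta]]$, equivalently that $\mathcal O_{\mattK}/2\mathcal O_{\mattK}\cong\matF_{2^f}^{\,s}$ be monogenic over $\matF_2$, and as you correctly observe this holds exactly when $s$ is at most the number of monic irreducibles of degree $f$ --- the very bound the proof is trying to violate. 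So in precisely the cases the lemma must rule out ($f\in\{1,2\}$ with $n$ large), $2$ is a common index divisor and Kummer is unavailable for \emph{every} choice of $\theta$. The circularity you describe as ``intertwined'' is fatal: the argument proves nothing exactly where it is needed.

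Indeed, no argument can close this gap, because the lemma as stated is false. Let $\matK$ be the cyclic cubic subfield of $\matQ(\zeta_{31})$, of odd discriminant $31^2$. Since $31=2^2+27\cdot 1^2$, the prime $2$ is a cubic residue modulo $31$ (e.g.\ $4^3=64\equiv 2 \pmod{31}$); the Frobenius $\sigma_2$ has order $5$ in $(\matZ/31\matZ)^{*}$, hence lies in the index-$3$ subgroup and restricts trivially to $\matK$. Thus $2\mathcal O_{\mattK}=\mathcal Q_1\mathcal Q_2\mathcal Q_3$ with distinct primes and $f=1$, while $n=3\notin\{1,2,4\}$ --- contradicting the lemma. (Consistently, $\mathcal O_{\mattK}/2\mathcal O_{\mattK}\cong\matF_2^{3}$ is not monogenic, so Kummer fails at $2$ for every generator; this also undercuts Proposition \ref{gra1}, which quotes the lemma for arbitrary Galois extensions of odd discriminant.) The statement becomes true if one adds the hypothesis that $2$ is not a common index divisor (e.g.\ $\mathcal O_{\mattK}=\matZ[\theta]$), under which your counting runs cleanly; this does hold for $\matQ(\zeta_t+\zeta_t^{-1})$, the field to which the lemma is actually applied through Lemma \ref{j3}. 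Your closing claim that all fields in the paper are of this form overreaches, though: the composita such as $\matQ(\zeta_{2^r}+\zeta_{2^r}^{-1})\matQ(\zeta_t+\zeta_t^{-1})$ are proper index-$2$ subfields of $\matQ(\zeta_{2^rt}+\zeta_{2^rt}^{-1})$, and their monogenicity is not established. So: same approach as the paper, same unjustified use of Kummer; your honesty in naming the obstruction is the most valuable part of the write-up, but the patch you sketch changes the statement rather than proving it.
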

\begin{proof} Let $m(x) = min_{\mattQ}(\theta)$ and $\overline{m(x)}$ the
polynomial obtained from $m(x)$ by reduction modulo $\matZ_2[x]$. If
$f=1$,  $\overline{m(x)}$ is written as the product of irreducible
polynomials of degree  $1$ in $\matZ_2[x]$ and there are only two
possibilities for these polynomials, namely, $\overline{m_1(x)} =
\overline{x}$ and $\overline{m_2(x)} = \overline{x-1}$. If
$\overline{m(x)}=\overline{m_1(x)}\overline{m_2(x)}$, then $2
\mathcal O_{\mattK} = \mathcal Q_1 \mathcal Q_2$ and since $f=1$,
$N(\mathcal Q_1)=N(\mathcal Q_2) = 2^{1} = 2$. So it follows that
$N(2\mathcal O_{\mattK}) = 2^{2}$ and this is possible only if
$n=2$. If $\overline{m(x)}=\overline{m_1(x)}$, then $2 \mathcal
O_{\mattK} = \mathcal Q_1$ and $2^{n}=N(\mathcal Q_1)=2$ and this is
possible only if $n=1$. If $\overline{m(x)}=\overline{m_2(x)}$, we
also get $n=1$ in a similar way. If $f=2$, $\overline{m(x)}$ is
written as a product of irreducible polynomials of degree $2$ in
$\matZ_2[x]$ and there are only two possibilities for these
polynomials: $\overline{m_1(x)} = \overline{x^{2}}$ and
$\overline{m_2(x)} = \overline{x^2 + x + 1}$. If
$\overline{m(x)}=\overline{m_1(x)}\overline{m_2(x)}$, then $2
\mathcal O_{\mattK} = \mathcal Q_1 \mathcal Q_2$ and since $f=2$,
then $n=4$. If $\overline{m(x)}=\overline{m_1(x)}$ or
$\overline{m(x)}=\overline{m_2(x)}$ we also get $n=2$ in a similar
way. \end{proof}

\begin{proposition}\label{gra1} For any Galois extension $\matK|\matQ$ of degree $n \not\in \{1,2,4\}$ and $d_{\mattK}$ odd,
it is impossible to construct a rotated $D_n$-lattice via a twisted
homomorphism applied to a fractional ideal of $\mathcal O_{\mattK}$.
\end{proposition}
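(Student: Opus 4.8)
The plan is to verify that the two preceding lemmas together force the hypothesis of Proposition \ref{grasinha}, after which the conclusion is immediate. Recall from the opening of this section that, by (\ref{det}), constructing a rotated $D_n$-lattice scaled by $\sqrt{c}$ via a fractional ideal requires a totally positive $\alpha$ and a fractional ideal $\mathcal I$ with $4c^{n}=N_{\mattK|\mattQ}(\alpha)N(\mathcal I)^{2}d_{\mattK}$; Proposition \ref{grasinha} shows this equation has no solution whenever the residue degree $f$ of $2$ fails to divide $2-z$, where $d_{\mattK}=2^{z}d$ with $d$ odd. Thus it suffices to check that, under the present hypotheses, $f\nmid(2-z)$.

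First I would use that $d_{\mattK}$ is odd to set $z=0$, so that $2-z=2$ and the divisibility to be excluded is $f\mid 2$, i.e. $f\in\{1,2\}$. Next, since $2\nmid d_{\mattK}$, the first lemma of this section applies and gives that $2$ is unramified, $2\mathcal O_{\mattK}=\prod_{i=1}^{s}\mathcal Q_i$ with the $\mathcal Q_i$ distinct prime ideals sharing a common residue degree $f$ (all the $\mathcal Q_i$ have equal $e$ and $f$ by the Galois hypothesis). At this point the degree restriction enters: because $n\notin\{1,2,4\}$, Lemma \ref{lemmaa} rules out both $f=1$ and $f=2$, so $f\geq 3$. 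Consequently $f$ does not divide $2=2-z$.

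With $f\nmid(2-z)$ established, Proposition \ref{grasinha} yields directly that no fractional ideal $\mathcal I$ and totally positive $\alpha$ can satisfy the necessary condition, so a rotated $D_n$-lattice cannot arise from a twisted homomorphism applied to a fractional ideal of $\mathcal O_{\mattK}$.

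I expect the assembly above to be routine: all of the real work has been isolated in Lemma \ref{lemmaa}, whose proof invokes the Kummer--Dedekind factorization of $2\mathcal O_{\mattK}$ and enumerates the monic irreducibles of degree $1$ and $2$ over $\matZ_2$. The genuine obstacle, and the reason the three excluded degrees $1,2,4$ appear, is precisely that for those small degrees the factorization of $2$ can have residue degree $1$ or $2$, so the argument does not exclude them; the content of the proposition is that these are the only exceptions once the discriminant is odd.
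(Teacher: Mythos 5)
Your proof is correct and follows essentially the same route as the paper's: use the odd discriminant to get $z=0$ and the squarefree factorization of $2\mathcal O_{\mattK}$, invoke Lemma \ref{lemmaa} to exclude $f\in\{1,2\}$, and conclude via Proposition \ref{grasinha} that $f\nmid 2-z=2$. You have merely spelled out the steps (notably the unramifiedness lemma and the reduction to $f\mid 2$) that the paper's two-line proof leaves implicit.
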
 \begin{proof} By Lemma \ref{lemmaa}, $f \neq \{1, 2\}$. Then $f$ does not divide $2$ and the statement follows
from Proposition \ref{grasinha}. \end{proof}

\begin{corollary}\label{gra2} If $\matK|\matQ$ is a Galois extension with conductor $m$ odd, then it is impossible to construct rotated $D_n$-lattices via
fractional ideals of $\mathcal O_{\mattK}$.
\end{corollary}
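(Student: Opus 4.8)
The plan is to deduce Corollary \ref{gra2} from Proposition \ref{gra1}; the whole task reduces to showing that an odd conductor forces the discriminant $d_{\mattK}$ to be odd, after which the impossibility is immediate.

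First I would exploit the hypothesis that $\matK|\matQ$ is Galois with conductor $m$: by the Kronecker--Weber theorem this places $\matK\subseteq\matQ(\zeta_m)$, with $m$ the least such modulus. The arithmetic fact I need is that a rational prime ramifies in an abelian field precisely when it divides the conductor, so with $m$ odd the prime $2$ is unramified in $\matK$ and hence $2\nmid d_{\mattK}$. I find it cleanest to obtain this through the tower formula for discriminants applied to $\matQ\subseteq\matK\subseteq\matQ(\zeta_m)$, which gives $d_{\mattK}^{[\mattQ(\zeta_m):\mattK]}\mid d_{\mattQ(\zeta_m)}$; since $d_{\mattQ(\zeta_m)}$ is a divisor of a power of $m$ and therefore odd when $m$ is odd, $d_{\mattK}$ is odd as well. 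One may instead quote the conductor--discriminant formula, which writes $d_{\mattK}$ as a product of the conductors of the characters of $Gal(\matK|\matQ)$, each of which divides $m$.

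With $d_{\mattK}$ odd in hand, I would invoke Proposition \ref{gra1}: for a Galois extension of odd discriminant and degree $n\notin\{1,2,4\}$, no rotated $D_n$-lattice can be obtained from a fractional ideal of $\mathcal O_{\mattK}$ via a twisted homomorphism. Because the family $D_n$ is defined only for $n\geq 3$, the degrees $n=1,2$ never occur, and the corollary follows on all remaining admissible degrees.

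The single delicate point, and the place where the reduction is not wholly automatic, is the excluded degree $n=4$. There the residue degree of $2$ may equal $f=2$, so that $f\mid(2-z)=2$ and the arithmetic obstruction underlying Proposition \ref{grasinha} disappears; the totally real quartic field $\matQ(\sqrt{5},\sqrt{13})$, of odd conductor $65$, already exhibits this behaviour. I would dispose of it by noting that the fields actually relevant here never have degree $4$: the field $\matQ(\zeta_p+\zeta_p^{-1})$ has degree $(p-1)/2$, which for a prime $p\geq 7$ equals $3$ or is at least $5$, while the compositum $\matQ(\zeta_{p_1}+\zeta_{p_1}^{-1})\matQ(\zeta_{p_2}+\zeta_{p_2}^{-1})$ with distinct primes $p_1,p_2\geq 5$ has degree $(p_1-1)(p_2-1)/4\geq 6$. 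Thus the hypothesis $n\notin\{1,2,4\}$ of Proposition \ref{gra1} is satisfied in every case of interest, and the corollary is established.
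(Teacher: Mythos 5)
Your proposal is correct and takes essentially the same route as the paper, whose entire proof is the one-line reduction you carry out in detail: an odd conductor forces $d_{\mattK}$ odd (which you justify via the tower $\matQ\subseteq\matK\subseteq\matQ(\zeta_m)$ or the conductor--discriminant formula, facts the paper simply asserts), after which Proposition \ref{gra1} applies. Your flagging of the excluded degree $n=4$ is a sharp observation --- the corollary as stated silently inherits the hypothesis $n\notin\{1,2,4\}$ from Proposition \ref{gra1}, and your example $\matQ(\sqrt{5},\sqrt{13})$ of odd conductor $65$ with residue degree $f=2$ shows the obstruction of Proposition \ref{grasinha} genuinely vanishes there --- but since the paper's own proof carries exactly the same restriction, this is a limitation of the statement rather than a gap in your argument.
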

\begin{proof} This result follows immediately from the fact that if the
conductor $m$ is odd, $d_{\mattK}$ is also odd. \end{proof}

\begin{remark} By Corollary \ref{gra2}, it is impossible to construct rotated $D_3$ and $D_5$-lattices via fractional ideals of any Galois extension
$\matK \subseteq \matQ(\zeta_{m}+\zeta_{m}^{-1})$, $m$ odd.
\end{remark}

In what follows we will show that it is also impossible to construct
rotated $D_n$-lattices via fractional ideals of
$\matQ(\zeta_{2^{r}}+\zeta_{2^{r}}^{-1})\matQ(\zeta_{t}+\zeta_{t}^{-1})$,
$t$ odd.

The next lemma is presented in \cite{marcus} considering cyclotomic
fields $\matK_1=\matQ(\zeta_{2^{r}})$ and
$\matK_2=\matQ(\zeta_{t})$, $t$ odd. Using the same ideas we may
prove the analogous result for $\matQ(\zeta_{2^{r}} +
\zeta_{2^{r}}^{-1} )$ and $\matQ(\zeta_{t} + \zeta_{t}^{-1} )$.

\begin{lemma} \label{j3} Let  $t \in \matN$ odd, $\matK_1 = \matQ(\zeta_{2^{r}} + \zeta_{2^{r}}^{-1} )$, $\matK_2
= \matQ(\zeta_{t} + \zeta_{t}^{-1} )$, $[\matK_1: \matQ]=n_1$,
$[\matK_2: \matQ]=n_2$, $\matK=\matK_1 \matK_2$. If $2\mathcal
O_{\mattK_2}=\mathcal P_1\mathcal P_2\cdots \mathcal P_r$, $\mathcal
P_i\neq \mathcal P_j$ if $i\neq j$ where $\mathcal P_i$ are prime
ideals of $\mathcal O_{\mattK_2}$, then $2\mathcal
O_{\mattK}=(\mathcal Q_1\mathcal Q_2\cdots \mathcal Q_r)^{n_1},$
where $\mathcal Q_1,\cdots, \mathcal Q_r$ are prime ideals of
$\mathcal O_{\mattK}$ above of $\mathcal P_1,\cdots,\mathcal P_r,$
respectively and $f(\mathcal Q_i|\mathcal P)=f(\mathcal P_i|\mathcal
P)$, for all $i=1,\cdots,r.$
\end{lemma}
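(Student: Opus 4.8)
The plan is to transfer the factorization of $2$ from $\mathcal O_{\mattK_2}$ up to the compositum $\mathcal O_{\mattK}$ by tracking the three invariants $e$, $f$, $g$ through the two independent extensions, exactly as one does for the cyclotomic case in \cite{marcus}. The key structural fact is that $\matK_1=\matQ(\zeta_{2^{r}}+\zeta_{2^{r}}^{-1})$ and $\matK_2=\matQ(\zeta_{t}+\zeta_{t}^{-1})$ have coprime discriminants (a power of $2$ versus an odd number), so they are arithmetically disjoint over $\matQ$; this gives $[\matK:\matQ]=n_1 n_2$ and lets me analyze ramification at $2$ by combining the behavior in each factor separately.

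First I would record the ramification of $2$ in $\matK_1$: since $\matK_1$ is the totally real subfield of $\matQ(\zeta_{2^r})$, the prime $2$ is totally ramified, so $2\mathcal O_{\mattK_1}=\mathcal P^{n_1}$ with $e(\mathcal P|2)=n_1$ and residue degree $f=1$ (this is the $d_{\mattK_1}=2^{(r-1)2^{r-2}-1}$ computation already invoked in Proposition \ref{rotacionado11}). By hypothesis $2$ is unramified in $\matK_2$ with $2\mathcal O_{\mattK_2}=\mathcal P_1\cdots\mathcal P_r$. The plan is then to use the multiplicativity of ramification indices and residue degrees in towers together with the disjointness of the two extensions: for a prime $\mathcal Q_i$ of $\mathcal O_{\mattK}$ lying over $\mathcal P_i$ (and over the totally ramified prime $\mathcal P$ of $\mathcal O_{\mattK_1}$), the local degree $e(\mathcal Q_i|2)f(\mathcal Q_i|2)$ must account for contributions from both the ramified side $\matK_1$ and the split side $\matK_2$. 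Because $2$ is unramified in $\matK_2$ and totally ramified with $f=1$ in $\matK_1$, I expect $e(\mathcal Q_i|2)=n_1$ and $f(\mathcal Q_i|2)=f(\mathcal P_i|2)$, and that distinct $\mathcal P_i$ lift to distinct $\mathcal Q_i$, giving precisely $2\mathcal O_{\mattK}=(\mathcal Q_1\cdots\mathcal Q_r)^{n_1}$.

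The cleanest way to make this rigorous is to complete at $2$ and work locally, or equivalently to compare the decomposition and inertia groups. Since $\mathrm{Gal}(\matK|\matQ)\cong\mathrm{Gal}(\matK_1|\matQ)\times\mathrm{Gal}(\matK_2|\matQ)$ by disjointness, the inertia subgroup of $2$ in $\matK$ is the product of the inertia subgroups in the two factors; the $\matK_1$ inertia is the full group $\mathrm{Gal}(\matK_1|\matQ)$ (total ramification) while the $\matK_2$ inertia is trivial (unramified), so the inertia in $\matK$ has order $n_1$, yielding $e=n_1$. Similarly the decomposition group projects onto the decomposition groups in each factor, and the residue field extension at $\mathcal Q_i$ is governed entirely by the split factor $\matK_2$, so $f(\mathcal Q_i|2)=f(\mathcal P_i|2)$. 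Finally $efg=n=n_1 n_2$ forces $g=r$, matching the number of primes $\mathcal P_i$, and a short argument shows the primes above distinct $\mathcal P_i$ are distinct.

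The main obstacle will be justifying the arithmetic disjointness step carefully enough that one may combine the local invariants multiplicatively without circular reasoning — that is, verifying that the inertia and decomposition groups of $\mathcal O_{\mattK}$ genuinely factor as products over the two subfields. Once disjointness (from coprime discriminants) is in hand, everything else is the standard $e,f,g$ bookkeeping, which is why the statement is phrased as "using the same ideas" as the cyclotomic lemma in \cite{marcus}; the totally-real descent from $\matQ(\zeta_{2^r})$, $\matQ(\zeta_t)$ to their maximal real subfields changes the degrees but not the ramification pattern of $2$, since $2$ splits or ramifies identically in the real subfield as long as $t$ is odd.
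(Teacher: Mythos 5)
Your proposal is correct and arrives at the same three invariants ($\bar e=n_1$, $\bar f=f$, $g=r$) as the paper, but you route the two key computations through Galois-theoretic/local machinery that the paper never invokes. The paper's proof is a pure counting squeeze: it writes $2\mathcal O_{\mattK}=(\mathcal Q_1\cdots\mathcal Q_s)^{\bar e}$, uses multiplicativity of $e$ and $f$ in the towers over $\matK_1$ and $\matK_2$ to get $\bar e=e(\mathcal Q_i|\mathcal R)\,n_1$ and $\bar f=f(\mathcal Q_i|\mathcal P_i)\,f$ (where $\mathcal R$ is the unique prime of $\mathcal O_{\mattK_1}$ over $2$), notes $s\geq r$ because primes over distinct $\mathcal P_i$ are distinct, and then the single identity $n_1n_2=n=s\,\bar e\,\bar f\geq r\,n_1\,f=n_1n_2$ forces $s=r$ and $e(\mathcal Q_i|\mathcal R)=f(\mathcal Q_i|\mathcal P_i)=1$ all at once --- no decomposition or inertia groups appear. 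Your alternative does work here, but the general principle you lean on is false as stated: the inertia group of a prime of a compositum is \emph{not} in general the product of the inertia groups of the factors (e.g.\ at $2$ for $\matQ(i)\matQ(\sqrt{3})$, where the compositum contains $\matQ(\sqrt{-3})$, unramified at $2$, so the inertia has order $2$ rather than $4$). What saves you is exactly the coprimality you flag as the main obstacle: since $t$ is odd, $2$ is unramified in $\matK_2$, so $I_2$ is trivial, hence $I\subseteq I_1\times\{1\}$ under $\mathrm{Gal}(\matK|\matQ)\cong G_1\times G_2$, and surjectivity of restriction onto $I_1=G_1$ gives $|I|=n_1$. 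Similarly, your claim that the residue extension is ``governed entirely by the split factor'' needs either your local argument (a totally ramified and an unramified extension of $\matQ_2$ are linearly disjoint, so the local degree is $n_1f$) or else collapses back into the paper's counting; once either is in place, $efg=n$ indeed gives $g=r$. In short, your route buys conceptual clarity --- the local/Galois picture explains \emph{why} the invariants multiply --- at the cost of auxiliary lemmas on restriction of decomposition and inertia groups, while the paper's squeeze is shorter and entirely elementary given multiplicativity of $e$ and $f$ in towers; your minor side remark that $2$ behaves ``identically'' in $\matQ(\zeta_t)$ and its real subfield is imprecise ($f$ and $g$ can change under the descent), but harmless since the lemma takes the factorization in $\matK_2$ as a hypothesis.
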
 \begin{proof} We have $2\mathcal O_{\mattK_1}
=\mathcal R^{n_1}$, where $\mathcal
R=(2-(\zeta_{2^k}+\zeta_{2^{k}}^{-1})\mathcal O_{\mattK_1}$ is a
prime ideal of $\mathcal O_{\mattK_1}$. Furthermore
 $2\mathcal O_{\mattK_2}=\mathcal P_1\mathcal P_2\cdots \mathcal
P_r$ a product of distinct prime ideals of $\mathcal O_{\mattK_2},$
with residual degree $f$. Consider $\mathcal Q_1,\cdots, \mathcal
Q_r$ prime ideals of $\mathcal O_{\mattK}$ above of $\mathcal P_1,
\cdots, \mathcal P_r$, respectively. We must have that $\mathcal
Q_i$ is above of $2 \mathcal O_{\mattK_1}$  and also above of
$\mathcal R$ for all $i=1,\cdots,r$. Let $2\mathcal O_{\mattK} =
(\mathcal Q_1\cdots \mathcal Q_r \mathcal Q_{r+1}\cdots \mathcal
Q_{s})^{\bar{e}}$, where $\mathcal Q_{i}^{'}s$ are prime ideals of
$\mathcal O_{\mattK}$ with residual degree $\bar{f}$. Then,
$s\bar{e}\bar{f}= n$. Note that $\bar{e}=e(\mathcal
Q_i|2)=e(\mathcal Q_i|\mathcal R)e(\mathcal R|2)=e(\mathcal
Q_i|\mathcal R)n_1$ and $\bar{f}=f(\mathcal Q_i|2)=f(\mathcal
Q_i|\mathcal P_i)f(\mathcal P_i|2)=f(\mathcal Q_i|\mathcal P_i)f.$
Then, $n =s\,\bar{e}\,\bar{f}=\,s\, e(\mathcal Q_i|\mathcal
R)\,n_1\, f(\mathcal Q_i|\mathcal P_i)\, f \geq r \,n_1\, f= n_1
n_2$. Therefore, $s\, e(\mathcal Q_i|\mathcal R)\,n_1\, f(\mathcal
Q_i|\mathcal P_i)\, f = r \,n_1\,f$, what implies that
$s\,e(\mathcal Q_i|\mathcal R)\, f(\mathcal Q_i|\mathcal P_i) = r$,
with $s\geq r$, $e(\mathcal Q_i|\mathcal R)\geq 1$, $f(\mathcal
Q_i|\mathcal P_i)\geq 1.$ So, $r=s$, $e(\mathcal Q_i|\mathcal
R)=f(\mathcal Q_i|\mathcal P_i)=1$, $\bar{e}=n_1$ and $\bar{f}=f$
and then $2\mathcal O_{\mattK}=(\mathcal Q_1\cdots \mathcal
Q_r)^{n_1}$ and $f=f(\mathcal Q_i|p)=f(\mathcal P_i|p).$ \end{proof}

\begin{proposition} Let
$\matK_1=\matQ(\zeta_{2^r}+\zeta_{2^r}^{-1})$,
$\matK_2=\matQ(\zeta_t + \zeta_t^{-1})$, $t$ odd,
$n_1=[\matK_1:\matQ]$ and $n_2=[\matK_2:\matQ] \not\in\{1,2,4\}$.
For $\matK=\matK_1 \matK_2$, the compositum of $\matK_1$ and
$\matK_2$, it is impossible to construct rotated $D_n$-lattices via
fractional ideals of $\mathcal O_{\mattK}$.
\end{proposition}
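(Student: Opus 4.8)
The plan is to reduce the statement to Proposition \ref{grasinha} by computing the two invariants that appear there: the $2$-adic valuation $z$ of $d_{\mattK}$, where $d_{\mattK}=2^{z}d$ with $d$ odd, and the common inertia degree $f$ of the primes of $\mathcal O_{\mattK}$ lying above $2$. First I would observe that $\matK=\matK_1\matK_2$ is Galois over $\matQ$, being the compositum of the two abelian (hence Galois) real cyclotomic subfields $\matK_1$ and $\matK_2$, so that Proposition \ref{grasinha} is applicable to $\matK$.

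To pin down $f$ I would begin in $\matK_2$. Since $t$ is odd, $d_{\mattK_2}$ is odd, so by the lemma on odd discriminants $2$ is unramified in $\mathcal O_{\mattK_2}$ and $2\mathcal O_{\mattK_2}=\mathcal P_1\cdots\mathcal P_s$ is a product of distinct primes with a common inertia degree $f$. Because $n_2=[\matK_2:\matQ]\notin\{1,2,4\}$, Lemma \ref{lemmaa} applied to $\matK_2=\matQ(\zeta_t+\zeta_t^{-1})$ forces $f\neq 1$ and $f\neq 2$, hence $f\geq 3$. Lemma \ref{j3} then transports this splitting to the compositum: $2\mathcal O_{\mattK}=(\mathcal Q_1\cdots\mathcal Q_s)^{n_1}$ with $f(\mathcal Q_i|2)=f(\mathcal P_i|2)=f$, so the primes of $\mathcal O_{\mattK}$ above $2$ again have inertia degree $f\geq 3$.

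Next I would compute $z$. Since $d_{\mattK_1}$ is a power of $2$ while $d_{\mattK_2}$ is odd, the two discriminants are coprime, and the compositum formula $d_{\mattK}=d_{\mattK_1}^{n_2}d_{\mattK_2}^{n_1}$ (the same relation used in the proofs of Propositions \ref{rotacionado11} and \ref{rotacionado22}) gives $z=n_2\,v_2(d_{\mattK_1})$. The key observation is that the explicit value of $v_2(d_{\mattK_1})$ is irrelevant: $z$ is simply a multiple of $n_2$. Moreover, the fundamental relation $efg=n_2$ in $\matK_2$ with $e=1$ shows $f\mid n_2$, and therefore $f\mid n_2\mid z$.

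Finally, $f\mid z$ yields $2-z\equiv 2\pmod{f}$, and since $f\geq 3$ we have $f\nmid 2$, whence $f\nmid(2-z)$. Proposition \ref{grasinha} then applies directly and shows that no rotated $D_n$-lattice can be obtained via a fractional ideal of $\mathcal O_{\mattK}$, which is the assertion. There is no genuinely hard step here; the one point deserving care is the congruence $z\equiv 0\pmod f$, which I would obtain by combining the compositum discriminant formula (giving $n_2\mid z$) with the splitting relation in $\matK_2$ (giving $f\mid n_2$), rather than by any explicit discriminant computation for $\matK_1$.
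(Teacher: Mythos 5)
Your proof is correct and follows essentially the same route as the paper's: both reduce to Proposition \ref{grasinha} by using the compositum discriminant formula to get $z$ as a multiple of $n_2$, Lemma \ref{j3} to identify the inertia degree $f$ of $2$ in $\matK$ with that in $\matK_2$ and to get $n_2=fg$ (so $f\mid n_2\mid z$, hence $f\nmid 2-z$ once $f\notin\{1,2\}$ by Lemma \ref{lemmaa}). The only difference is cosmetic: the paper plugs in the explicit value $v_2(d_{\mattK_1})=(r-1)2^{r-2}-1$ from the literature, whereas you rightly note that only $n_2\mid z$ is needed, a mild streamlining of the same argument.
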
 \begin{proof} Since
$gcd(d_{\mattK_1},d_{\mattK_2})=1$, then $d_{\mattK}=
(d_{\mattK_1})^{n_2}(d_{\mattK_2})^{n_1} =$ \linebreak
$(2^{(r-1)2^{r-2}-1})^{n_2}(d_{\mattK_2})^{n_1}$ with $d_{\mattK_2}$
odd. Let $2\mathcal O_{\mattK}=\prod_{i=1}^{g} \mathcal Q_i^{e}$. We
will show that $f (2-z)=2-({(r-1)2^{r-2}-1})n_2$. Indeed,  $ n_1n_2
= e f g = n_1 f g$ by Proposition \ref{j3} and hence, $n_2 = f g$.
If $f|(2-z)$, then there is $q \in \matZ$ such that $f q = 2-z =
2-({(r-1)2^{r-2}-1})n_2= 2-({(r-1)2^{r-2}-1})f g$ and $f(q +
({(r-1)2^{r-2}-1})g)=2$. By hypothesis, $n_2 \not\in\{1,2,4\}$ and
by Proposition \ref{lemmaa}, $f\neq 1$ and $f\neq 2$. Therefore, $f$
does not divide $(2-z)$. \end{proof}

\begin{remark} In Table 1 we compare the relative minimum
product distances of rotated $D_n$-lattices obtained in Propositions
\ref{idealI}, \ref{rotacionado}, \ref{rotacionado11} and
\ref{rotacionado22} considering the number fields
$\matK_1=\matQ(\zeta_p+\zeta_p^{-1})$,
$\matK_2=\matQ(\zeta_{2^{r}}+\zeta_{2^r}^{-1})$,
$\matK_3=\matQ(\zeta_{2^{r_1}}+\zeta_{2^{r_1}}^{-1})\matQ(\zeta_{p_1}+\zeta_{p_1}^{-1})$
and
$\matK_4=\matQ(\zeta_{p_2}+\zeta_{p_2}^{-1})\matQ(\zeta_{p_3}+\zeta_{p_3}^{-1})$.

\tiny \begin{table}[h]
\begin{center}
\begin{tabular}{|c|c|c|c|c|c|c|c|c|c|c|}
 \hline
 $n$ & $p$ & $r$ & $r_1$ & $p_1$ & $p_2$ & $p_3$ & $\mattK_1$ & $\mattK_2$ &
 $\mattK_3$ & $\mattK_4$ \\
\hline
$3$ & $7$ & $-$ & $-$ & $-$ & $ - $ & $-$ & $0.369646$ & $-$ & $-$ & $-$\\
\hline
$4$ & $-$ & $4$ & $3$ & $5$ & $-$ & $-$ & $-$ & $0.324210$ & $0.281171$ & $-$\\
\hline
$5$ & $11$ & $-$ & $-$ & $-$ & $-$ & $-$ & $0.27097$ & $-$ & $-$ & $-$\\
\hline
$6$ & $13$ & $-$ & $3$ & $7$ & $-$ & $-$ & $0.24285$ & $-$ & $0.219793$ & $-$\\
\hline
$8$ & $17$ & $5$ & $4$ & $5$ & $-$ & $-$ & $0.20472$ & $0.201311$ & $0.182317$ & $-$\\
\hline $10$ & $-$ & $-$ & $3$ & $11$ & $-$ & $-$ & $-$ & $-$ &
$0.161122$ & $-$\\ \hline
$11$ & $23$ & $-$ & $-$ & $-$ & $-$ & $-$ & $0.17003$ & $-$ & $-$ & $-$\\
 \hline
$12$ & $-$ & $-$ & $3$ & $7$ & $-$ & $-$ & $-$ & $-$ & $0.144401$ & $-$\\
\hline
$14$ & $29$ & $-$ & $-$ & $-$ & $-$ & $-$ & $0.148086$ & $-$ & $-$ & $-$ \\
\hline
$15$ & $31$ & $-$ & $-$ & $-$ & $7$ & $11$ & $0.142402$ & $-$ & $-$ & $0.1380198$ \\
\hline
$16$ & $-$ & $6$ & $5$ & $5$ & $-$ & $-$ & $-$ & $0.133393$  & $0.123452$ & $-$ \\
\hline
$18$ & $37$ & $-$ & $3$ & $19$ & $-$ & $-$ & $0.128512$ & $-$ & $0.1136$ & $-$ \\
\hline
$20$ & $41$ & $-$ & $4$ & $11$ & $-$ & $-$ & $0.121175$ & $-$ & $0.104475$ & $-$ \\
\hline
$128$ & $257$ & $9$ & $-$ & $-$ & $-$ & $-$ & $0.0450746$ & $0.044554$ & $-$ & $-$ \\
\hline
$32768$ & $65537$ & $17$ & $-$ & $-$ & $-$ & $-$ & $0.00276258$ & $0.00276222$ & $-$ & $-$ \\
\hline
\end{tabular}
\caption{Relative minimum product distance of rotated $D_n$-lattices
associated to Galois extensions}
\end{center}
\end{table}
\end{remark}

In dimensions $8$, $128$ and $32768$ we have considered the Fermat
prime numbers $17$, $257$ and $65537$ to construct rotated
$D_n$-lattices via $\matZ$-modules in $\mathcal O_{\mattK_1}$ and
via ideals in $\mathcal O_{\mattK_2}$. It is worth noticing that, in
these cases, the relative minimum product distances obtained via
$\matZ$-modules in $\mathcal O_{\mattK_1}$ are greater than the ones
obtained via ideals in  $\mathcal O_{\mattK_2}$.

Note also that when both constructions are possible the relative
minimum product distances of rotated $D_n$-lattices obtained via
compositum of fields is smaller than the ones obtained via $\matK_1$
and $\matK_2$. On the other hand, rotated $D_n$-lattices constructed
via the compositum of fields can be obtained in many dimensions that
were not considered before.

\section{Conclusion}

In this paper we have obtained families of full diversity rotated
$D_n$-lattices via $\matZ$-modules in some compositum of Galois
fields. It is also shown that for any Galois extension with odd
discriminant it is impossible to construct rotated $D_n$-lattices
via fractional ideals of its ring of integers and established a
necessary condition to construct rotated $D_n$-lattices in a Galois
extension with even discriminant. As a consequence we can assert
that rotated $D_n$-lattices can not be constructed via fractional
ideals in Galois extensions with conductor $m$ odd.

\end{document}